\newtheorem*{claim}{\hspace{2em} Claim}
\newtheorem{theorem}{\hspace{2em}Theorem}[subsection]
\newtheorem{definition}[theorem]{\hspace{2em} Definition}
\newtheorem{lemma}[theorem]{\hspace{2em}Lemma}
\newtheorem{proposition}[theorem]{\hspace{2em} Proposition}
\newtheorem{Theorem}{\hspace{2em}Theorem}[section]
\newtheorem{Definition}[Theorem]{\hspace{2em} Definition}
\newtheorem{Lemma}[Theorem]{\hspace{2em} Lemma}
\newtheorem{Proposition}[Theorem]{\hspace{2em} Proposition}
\newtheorem{theoremalph}{Theorem}
\newtheorem*{theoremm}{Main Theorem}
\newcommand{\subsectionruninhead}{\@startsection{subsection}{2}{0mm}{-\baselineskip}{-0mm}{\bf\large}}
\newcommand{\subsubsectionruninhead}{\@startsection{subsubsection}{3}{0mm}{-\baselineskip}{-0mm}{\bf\normalsize}}
\begin{document}
  \newpage
  \title{On the F-expanding of Homoclinic class}
  \author{Wanlou Wu \and Bo Li }
  \maketitle
\begin{abstract}
  We establish a closing property for thin trapped homoclinic classes. Taking advantage of this property, we proved that if the homoclinic class $H(p)$ admits a dominated splitting $T_{H(p)}M=E\oplus_{<}F$, where $E$ is thin trapped (see Definition \ref{Def:TP}) and all periodic points homoclinically related to $p$ are uniformly $F$-expanding at the period (see Definition \ref{Def:expanding}), then $F$ is expanded (see Definition \ref{Def:TP}).
\end{abstract}
	
\section{Introduction}
  Let $f$ be a diffeomorphism on a compact manifold $M$ with metric $d$. Given two hyperbolic periodic points $p$ and $q$ of $f$, denoted by $W^s(p)$, $W^s(q)$ the stable manifolds of $p$ and $q$, denoted by $W^u(p)$, $W^u(q)$ the unstable manifolds of $p$ and $q$. We call that they are homoclinically related if $W^s(orb(p))\pitchfork W^u(orb(q))\neq\emptyset$ and $W^s(orb(q))\pitchfork W^u(orb(p))\neq\emptyset$. And we denote this relation by $p\sim q$. The homoclinic class of a hyperbolic periodic point $p$ is defined as $H(p)\triangleq\overline{\{q: q\in P(f), q\sim p\}}$, where $P(f)$ denotes the set of all hyperbolic periodic points of $f$. In general, the hyperbolicity of the periodic points contained in a compact invariant set $\Lambda$ is not enough to get the hyperbolicity of $\Lambda$. For example, Kupka-Smale Theorem \cite[$P91$]{PJ} affirms that every periodic orbit of $C^r$-generic ($r\geq 1$) diffeomorphisms is hyperbolic and the stable and unstable manifolds of the periodic orbits are pairwise transverse. It means that the homoclinic class is usually not hyperbolic although it contains many hyperbolic invariant sets: any set of periodic orbits homoclinically related to $p$. Bonatti, Gan and Yang \cite[Main Theorem]{BGY} gave a sufficient criterion for the hyperbolicity of a homoclinic class. They also raised a question: Can we obtain the hyperbolicity of an invariant compact set by using the hyperbolicity of the periodic orbits in the set?   
  
  Our main work shows that one can get some \textquotedblleft hyperbolicity\textquotedblright of $H(p)$ under the topological conditions (see Definition \ref{Def:TP}) on the periodic orbits homoclinically related to $p$. Now, we will introduce precisely the notions for these conditions.
\begin{Definition}\label{DS}
	The invariant splitting $T_{\Lambda}M=E\oplus F$ over a compact $f$-invariant set $\Lambda$ is a dominated splitting if there are two constants $C>0,~\lambda\in (0,1)$ such that $$\parallel Df^n\mid_{E_x}\parallel\cdot\parallel Df^{-n}\mid_{F_{f^n(x)}}\parallel\leq C\lambda^n,~\text{for any $x\in \Lambda$ and every $n\in \mathbb{N}$}.$$  
\end{Definition}
  
\begin{Definition}\label{Def:hyperbolic}
  A periodic point $p$ is hyperbolic if there are constants $C>0$, $\lambda\in (0,1)$ and an invariant splitting $T_pM=E_p\oplus F_p$, such that for every $n\in \mathbb{N}$, one has that $$\parallel Df^n\mid_{E_p}\parallel\leq C\lambda^n,~~\parallel Df^{-n}\mid_{F_p}\parallel\leq C\lambda^n.$$
\end{Definition}       

  For the dominated splitting $T_{\Lambda}M=E\oplus F$, a plaque family tangent to the bundle $E$ is a family of continuous maps $\mathcal{W}$ from the linear bundle $E$ to $M$ satisfying: 
\begin{flushleft}
  $~~~~\langle 1\rangle$ for each $x\in \Lambda$, the map $\mathcal{W}_x:E_x\rightarrow M$ is a $C^1$-embedding that satisfies $\mathcal{W}_x(0)=x$ and whose image is tangent to $E_x$ at $x$;\\
  $~~~~\langle 2\rangle$ $(\mathcal{W}_x)_{x\in\Lambda}$ is a continuous family of $C^1$-embeddings. 
\end{flushleft}
  Let $\mathcal{W}(x)$ be the image of embedding $\mathcal{W}_x$. A  plaque family $\mathcal{W}$ is locally invariant if there is $\delta>0$ such that for every $x\in\Lambda$, one has that $f\circ\mathcal{W}_x(B(0,\delta))\subseteq\mathcal{W}(fx)$, where $B(0,\delta)\subseteq E_x$ is the ball whose radius is $\delta$. Plaque Family Theorem \cite[Theorem 5.5]{HPS} shows that there always exists a locally invariant plaque family tangent to $E$. A plaque family is called \emph{trapped} if for every $x\in\Lambda$, one has that $f(\overline{\mathcal{W}(x)})\subseteq \mathcal{W}(fx)$.
  
\begin{Definition}\label{Def:TP}
  For the dominated splitting $T_{\Lambda}M=E\oplus F$, $E$ is thin trapped if for any neighborhood $U$ of the section $0$ in $E$, there is 
\begin{flushleft}
  $\langle 1\rangle$ a continuous family $\{\varphi_x\}_{x\in\Lambda}$ of $C^1$-diffeomorphisms of the spaces $\{E_x\}_{x\in\Lambda}$ supported in $U$;\\
  $\langle 2\rangle$ a constant $\delta>0$ such that $f(\overline{\mathcal{W}_x\circ\varphi_x(B(0,\delta))})\subseteq\mathcal{W}_{fx}\circ\varphi_{fx}(B(0,\delta))$ for any $x\in\Lambda$. 
\end{flushleft}
  $F$ is expanded on $\Lambda$, if there are $C>0$ and $\lambda\in(0,1)$ such that $$\|Df^{-n}|_{F_x}\|\leq C\lambda^n,~\text{ for any $x\in\Lambda$ and $n\in\mathbb{N}^+$}$$ 
\end{Definition}

\begin{Definition}\label{Def:expanding}
  Let $\Lambda=\{q_n\}_{n\in\mathbb{N}}$ be a sequence of hyperbolic periodic points of diffeomorphism $f$, for the dominated splitting $T_\Lambda M=E\oplus_{<}F$, $f$ is uniformly $F$-expanding at the period on $\Lambda$ if there are two constants $C>0,~\lambda\in (0,1)$ such that for any $n\in\mathbb{N}$, one has that $$\prod^{\pi(q_n)}_{j=1}\parallel Df^{-1}\mid_{F_{f^j(q_n)}}\parallel\leq C\lambda^{\pi(q_n)},$$ where $\pi(q_n)$ is the period of the periodic point $q_n$. We also call that $F$ is uniformly $\lambda$-expanding at the period on $\Lambda$.    	
\end{Definition}
  Now, we introduce our main result and the sketch of our proof for the main result.
\begin{theoremm}\label{Thm:main}
  Let $f$ be a diffeomorphism on a compact Riemannian manifold, $p$ be a hyperbolic periodic point. Assume that the homoclinic class $H(p)$ admits a dominated splitting $T_{H(p)}M=E\oplus_{<}F$ and $E$ is thin trapped with dim$E=$ind $(p)$. If $f$ is uniformly $F$-expanding at the period on all periodic points homoclinically related to $p$, then $F$ is expanded on $H(p)$.  
\end{theoremm}

 This Main Theorem also gives a criterion for getting \textquotedblleft weak periodic point\textquotedblright  which means that they have a Lyapunov exponent arbitrarily close to zero in a given homoclinic class.
 
\begin{theoremalph}\label{wpp}
 	Let $p$ be a hyperbolic periodic point of a diffeomorphism $f$ on a compact Riemannian manifold. If the homoclinic class $H(p)$ satisfying:
 	\begin{enumerate}[$\bullet$]
 		\item the homoclinic class $H(p)$ admits a  partially hyperbolic splitting $T_{H(p)}M=E^s\oplus E^c\oplus E^u$ with $E^s$ is uniformly contracted, $E^u$ is uniformly expanding and dim$E^c=1$;
 		\item dim$E^s=$ind $(p)$ and $H(p)$ is not hyperbolic. 
 	\end{enumerate}
 	then for every $\varepsilon>0$, one can find a periodic point $q\in H(p)$ homoclinically related to $p$ such that $$\frac{1}{\pi(q)}\log(\lVert Df^{\pi(q)}|_{E^c_q}\rVert)\leq\varepsilon.$$  
\end{theoremalph}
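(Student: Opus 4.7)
The plan is to argue by contradiction and reduce the desired conclusion to the Main Theorem. Suppose, for contradiction, that there exists $\varepsilon_0>0$ such that every periodic point $q$ homoclinically related to $p$ satisfies $\frac{1}{\pi(q)}\log\|Df^{\pi(q)}|_{E^c_q}\|>\varepsilon_0$. I will show that this forces $H(p)$ to be uniformly hyperbolic, contradicting the hypothesis that $H(p)$ is not hyperbolic.

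Set $E:=E^s$ and $F:=E^c\oplus E^u$, giving a dominated splitting $T_{H(p)}M=E\oplus_{<}F$ with $\dim E=\mathrm{ind}(p)$. Since $E^s$ is uniformly contracted, $E$ is thin trapped in the sense of Definition \ref{Def:TP}: taking $\varphi_x$ to be the identity on $E_x$ (trivially supported in any prescribed neighborhood $U$ of the zero section) and choosing $\delta>0$ small enough, the uniform contraction of $E^s$ together with local invariance of the plaque family yields $f(\overline{\mathcal{W}_x(B(0,\delta))})\subseteq\mathcal{W}_{fx}(B(0,\delta))$ for every $x\in H(p)$.

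The crucial step is to verify that $f$ is uniformly $F$-expanding at the period on the set of periodic points homoclinically related to $p$. I would pass to a Riemannian metric adapted to the dominated splitting $E^c\oplus_{<}E^u$, so that $E^c\perp E^u$ pointwise and the one-step bound $\|Df^{-1}|_{E^u_y}\|\leq\lambda\|Df^{-1}|_{E^c_y}\|$ holds for some $\lambda<1$ (such a metric is available by Gourmelon's construction). In this metric $Df^{-1}$ is block-diagonal on $F=E^c\oplus E^u$, so
$$\|Df^{-1}|_{F_y}\|=\max\bigl(\|Df^{-1}|_{E^c_y}\|,\|Df^{-1}|_{E^u_y}\|\bigr)=\|Df^{-1}|_{E^c_y}\|.$$
Because $\dim E^c=1$, the product along the orbit of $q$ telescopes exactly, giving
$$\prod_{j=1}^{\pi(q)}\|Df^{-1}|_{F_{f^j(q)}}\|=\prod_{j=1}^{\pi(q)}\|Df^{-1}|_{E^c_{f^j(q)}}\|=\|Df^{\pi(q)}|_{E^c_q}\|^{-1}\leq e^{-\varepsilon_0\pi(q)}.$$
Hence uniform $F$-expansion at the period holds with constants $C=1$ and rate $e^{-\varepsilon_0}<1$. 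The Main Theorem then yields that $F$ is expanded on $H(p)$; combined with uniform contraction of $E^s$, $H(p)$ is uniformly hyperbolic, the desired contradiction.

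The main obstacle is producing the product estimate for uniform $F$-expansion at the period: \emph{a priori} the operator norm $\|Df^{-1}|_{F}\|$ is not bounded by $\|Df^{-1}|_{E^c}\|$, and in a generic metric the product of operator norms along an orbit can fail to decay exponentially even when $E^c$ does. The reduction hinges on the combination of two features: the one-dimensionality $\dim E^c=1$, which turns a product of scalar norms into the single norm $\|Df^{-\pi(q)}|_{E^c_q}\|$, and the adapted metric, which makes the splitting one-step dominated and the center and unstable subbundles orthogonal so that the operator norm on $F$ coincides with its restriction to the weak direction. A minor technical check is that thin trappedness, uniform $F$-expansion at the period, and the conclusion of the Main Theorem are all invariant under passage to an equivalent Riemannian metric on the compact set $H(p)$; this holds because each is an asymptotic exponential statement.
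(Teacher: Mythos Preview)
Your proof is correct and follows essentially the same route as the paper: set $E=E^s$, $F=E^c\oplus E^u$, invoke the Main Theorem (in contrapositive form), and exploit $\dim E^c=1$ to pass between the norm on $F$ and the norm on $E^c$. Your explicit passage to a Gourmelon adapted metric to justify $\|Df^{-1}|_{F_y}\|=\|Df^{-1}|_{E^c_y}\|$ is in fact more careful than the paper's own argument, which writes $\prod\|Df^{-1}|_{E^c}\|\geq\prod\|Df^{-1}|_{F}\|$ without further comment.
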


  The Main Theorem gives a criterion for getting some hyperbolicity of a homoclinic class from the same property of the periodic points in this homoclinic class. We should consider the question: how to establish the relations between non-periodic points in the compact set and periodic points? The Anosov Closing Lemma \cite[P269, Theorem 6.4.15]{KB} implies that for any point in a hyperbolic set whose orbit nearly returns to the point, there is a periodic orbit closely shadowing this nearly-returning orbit. Gan \cite[Theorem 1.1]{GS} showed that any quasi-hyperbolic pseudoorbit with recurrence can be shadowed by periodic orbit. But in our assumptions, the homoclinic class is not hyperbolic set and we can not get the quasi-hyperbolic pseudoorbit with respect to the dominated splitting on the homoclinic class. Even though the recurrent orbit (non-periodic) can be shadowed by periodic orbit, we also need to consider that: how to expand the property of periodic orbit to other orbit (non-periodic orbit).

\section{Preliminaries}  
  In this section, we introduce some important tools and some basic facts but useful for the proof of the Main Theorem. 

\subsection{Hyperbolic time}  

 Let $f$ be a diffeomorphism on a compact manifold $M$. For every $x\in M$ and $n\in\mathbb{N}^+$, $(x,n)$ denotes the segment of orbit $(x,n)\triangleq\{x,f(x),\cdots,f^{n-1}(x)\}$. We consider a compact invariant set $\Lambda$ which has a dominated splitting $T_\Lambda M=E\oplus_< F$. By \cite[P289, Appendix B]{BDV}, one can fix an admissible compact neighborhood of $\Lambda$ and denotes by $M(f,U)=\bigcap_{i\in\mathbb{Z}}f^iU$ the maximal invariant set in $U$. The dominated spliting $E\oplus_< F$ extends in a unique way on $M(f,U)$.
 
 \begin{definition} 
 	For any $\lambda\in(0,1)$ and $x\in M(f,U)$, an orbit segment $(x,n)$ is an uniform $\lambda$-string, if $\prod\limits_{j=k+1}^{n}||Df^{-1}|_{F_{f^j(x)}}||\leq \lambda^{n-k}$, for $k=0,1,\cdots,n-1$. And $n\in\mathbb{N^+}$ is called $\lambda$-hyperbolic time of $x$. 
 \end{definition}
 Denoted by $HT(x,\lambda)$ the set of all $\lambda$-hyperbolic times of $x$ and the $n$-th $\lambda$-hyperbolic time of $x$ is denoted by $HT_n(x,\lambda)$. For a periodic point $x$, denoted by $\Gamma_1(x,\lambda)$ the largest $\lambda$-hyperbolic time in the period $\pi(x)$ and the smallest $\lambda$-hyperbolic time larger than the period $\pi(x)$ is denoted by $\Gamma_2(x,\lambda)$. The following lemma, given by Pliss (\cite{PL}), gives us a tool to prove that a point satisfying the assumption that $\liminf\limits_{m\rightarrow\infty}\frac{1}{m}\sum\limits_{i=1}^{m}\log\|Df^{-1}|_{F_{f^i(x)}}\|\leq \log\lambda$, for same $\lambda\in(0,1)$, has many (positive density at infinity) hyperbolic times. 
 
\begin{lemma}[Pliss Lemma \cite{PL}]\label{Pliss}
  Given constants $A$ and $C_2<C_1<0$ with $A\geq |C_2|$, there is $\theta\in(0,1)$ such that if any real numbers $\{a_j\}_{j=1}^N$ satisfy the conditions
\begin{enumerate}[(1)]
	\item $|a_j|\leq A $, for $j=1,2,\cdots,N$;
	\item $\sum_{j=1}^{N}a_j\leq N C_2$. 
\end{enumerate}  
 then there is an integer $l\geq \theta N$ and a sequence numbers $1\leq n_1<n_2<\cdots<n_l\leq N$ such that $$\sum_{j=n+1}^{n_i}a_j\leq (n_i-n) C_1,~~~\forall~0\leq n<n_i,~i=1,2,\cdots,l.$$
\end{lemma}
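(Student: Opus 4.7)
The plan is to reformulate the conclusion as a statement about record minima of a partial sum sequence, and then to count those records by a telescoping drop argument. Set $b_j := a_j - C_1$ and form the partial sums $S_0 := 0$, $S_k := \sum_{j=1}^{k} b_j$. The requirement $\sum_{j=n+1}^{m} a_j \leq (m-n) C_1$ for every $0 \leq n < m$ is equivalent to $S_m \leq S_n$ for every $0 \leq n < m$, i.e.\ to $S_m$ attaining the minimum of $\{S_0, S_1, \ldots, S_m\}$. Call such an index $m$ a \emph{record}; the lemma then reduces to showing that at least $\theta N$ records occur in $\{1, \ldots, N\}$ for some $\theta \in (0,1)$ depending only on $A, C_1, C_2$.

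Next I would introduce the running minimum $M_k := \min\{S_0, S_1, \ldots, S_k\}$, which is non-increasing with $M_0 = 0$. The hypothesis $\sum_{j=1}^{N} a_j \leq N C_2$ translates to $S_N \leq N(C_2 - C_1)$, hence $-M_N \geq -S_N \geq N(C_1 - C_2) > 0$. At a non-record index $k$ one has $M_k = M_{k-1}$. At a record $k$, using $S_{k-1} \geq M_{k-1}$ and $a_k \geq -A$,
\[
 M_{k-1} - M_k \;=\; M_{k-1} - S_k \;\leq\; S_{k-1} - S_k \;=\; -b_k \;=\; C_1 - a_k \;\leq\; C_1 + A.
\]
Since $A \geq |C_2| > |C_1|$, the quantity $C_1 + A$ is strictly positive. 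Writing $l$ for the total number of records, telescoping gives $l(C_1 + A) \geq -M_N \geq N(C_1 - C_2)$, so the choice $\theta := (C_1 - C_2)/(2A)$ works; it lies in $(0,1)$ because $C_1 - C_2 = C_1 + |C_2| \leq 0 + A < 2A$. The records themselves, listed in increasing order, provide the desired indices $n_1 < n_2 < \cdots < n_l$.

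I expect no genuine obstacles. The only step requiring care is the per-record drop estimate, which simultaneously uses $M_{k-1} \leq S_{k-1}$ (to convert a drop in the running minimum into a one-step increment of $S$) and the a priori bound $a_k \geq -A$ (to bound that increment). Without this sharpening the telescoping would not yield a linear lower bound on $l$, but once it is in place the rest of the argument is routine bookkeeping.
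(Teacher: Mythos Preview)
The paper does not prove this lemma; it is stated as a citation of Pliss~\cite{PL} and used as a black box. Your argument is a correct and standard proof of the Pliss Lemma: the reformulation via $b_j = a_j - C_1$ turns the desired indices into running-minimum records of the partial sums $S_k$, and the telescoping bound $M_{k-1} - M_k \le A + C_1$ at each record, combined with the global drop $-M_N \ge N(C_1 - C_2)$, yields the density $\theta = (C_1 - C_2)/(2A)$. Nothing is missing; your proof stands on its own and is in fact the usual one found in the literature.
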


\begin{lemma}\label{many hyperbolic time}
  Given $\mu<\mu_{1}<\mu_{2}<0$ and $A>|\mu_{1}|$, for any real number sequence $\{a_i\}_{i=1}^{\infty}$ with $|a_i|\leq A$, if there is a number $m\in\mathbb{N}^+$ such that $\sum_{i=1}^{m}a_i\leq m\mu$ and $a_{i+m}=a_i$ for every $i\in\mathbb{N}$, then there are $\theta\in(0,1)$ and $N\in\mathbb{N}$ such that for any $k\geq N$, there exist $1\leq n_1<n_2<\cdots<n_l\leq k$ with $l\geq k\theta$ such that $$\sum_{i=n+1}^{n_j}a_i\leq (n_i-n)\mu_2,~~~\forall~0\leq n<n_j,~j=1,2,\cdots,l.$$
\end{lemma}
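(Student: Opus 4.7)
The plan is to reduce the statement to a direct application of the Pliss Lemma (Lemma~\ref{Pliss}). The periodicity hypothesis together with the bound $\sum_{i=1}^m a_i\le m\mu$ will let me upgrade the average bound from $\mu$ to the intermediate value $\mu_1$ on arbitrarily long initial windows $\{1,\dots,k\}$, at which point Pliss produces the desired positive density of indices at which the partial sums beat the weaker rate $\mu_2$.

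First I would estimate $\sum_{i=1}^{k}a_i$ for general $k$. Writing $k=qm+r$ with $0\le r<m$, the periodicity yields $\sum_{i=1}^{qm}a_i=q\sum_{i=1}^{m}a_i\le qm\mu$, and the leftover tail satisfies $\bigl|\sum_{i=qm+1}^{qm+r}a_i\bigr|\le rA<mA$. Combining these two estimates gives
\[
\sum_{i=1}^{k}a_i \;\le\; k\mu + m(A+|\mu|).
\]
Since $\mu<\mu_1<0$, choosing $N\in\mathbb{N}$ large enough that $N(\mu_1-\mu)\ge m(A+|\mu|)$ ensures $\sum_{i=1}^{k}a_i\le k\mu_1$ for every $k\ge N$.

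Next, for each such $k\ge N$ I would apply Lemma~\ref{Pliss} to the finite sequence $a_1,\dots,a_k$ with parameter choice $C_1=\mu_2$ and $C_2=\mu_1$; note $C_2<C_1<0$ as required. The uniform bound $|a_j|\le A$ is assumed, the constraint $A\ge|C_2|=|\mu_1|$ holds by hypothesis, and the average bound $\sum_{j=1}^{k}a_j\le kC_2$ was just verified. Pliss then yields a constant $\theta=\theta(A,\mu_1,\mu_2)\in(0,1)$ and indices $1\le n_1<\cdots<n_l\le k$ with $l\ge\theta k$ satisfying the stated partial-sum estimate.

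There is no substantial obstacle: the argument is essentially bookkeeping in the periodicity step and then a black-box invocation of Pliss. The key point worth emphasizing is that the Pliss constant $\theta$ depends only on $(A,\mu_1,\mu_2)$ and not on $k$, so the density $l/k\ge\theta$ is uniform over all long windows, which is exactly what is needed for the downstream use in the Main Theorem.
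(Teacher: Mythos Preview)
Your proposal is correct and follows essentially the same approach as the paper: use periodicity to obtain a uniform bound $\sum_{i=1}^{k}a_i\le k\mu_1$ for all large $k$, then invoke the Pliss Lemma with $C_2=\mu_1$, $C_1=\mu_2$. Your version is in fact slightly cleaner, since you give an explicit constant $m(A+|\mu|)$ and derive $N$ directly, whereas the paper passes through $\liminf_{k\to\infty}\frac{1}{k}\sum a_i\le\mu$ before reaching the same conclusion.
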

\begin{proof}
   Since $|a_i|\leq A$, then $r\triangleq\max\{a_1,a_1+a_2,\cdots,\sum_{i=1}^{m}a_i\}\leq mA<+\infty$. For any $k\in\mathbb{N}$, by integer division, we may assume that $k=nm+r_0$, where $0\leq r_0<m$. Hence, $\sum_{j=1}^{k}a_j\leq nm\mu+r$. Therefore, $$\liminf_{k\rightarrow\infty}\frac{1}{k}\sum_{i=1}^{k}a_i\leq \liminf_{k\rightarrow\infty}\frac{nm\mu}{k}+\liminf_{k\rightarrow\infty}\frac{r}{k}=\mu.$$ Thus, there is $N\in\mathbb{N}$ such that $\sum\limits_{i=1}^ka_i\leq n\mu_1$, for any $k\geq N$. By Pliss Lemma \ref{Pliss}, there is $1\leq n_1<n_2<\cdots<n_l\leq k$ with $l\geq k\theta$ such that $$\sum_{i=n+1}^{n_j}a_i\leq (n_j-n)~\mu_2,~\forall~0\leq n<n_j,~j=1,2,\cdots,l.$$
\end{proof}
 
 For two consecutive hyperbolic times, we can not give the estimation as the obstruction orbit segment (see Definition \ref{obstrction point}). The following content is a simple fact that can help us deal with the obstruction orbit segment.

\begin{definition}\label{obstrction point} 
  For any $\lambda\in(0,1]$ and $x\in M(f,U)$, an orbit segment $(x,n)$ is a $\lambda$-obstruction orbit segment, if $\prod\limits_{j=1}^{k}||Df^{-1}|_{F_{f^j(x)}}||\geq\lambda^{k}$, for $k=1,\cdots,n$. And the point $x$ is a $\lambda$-obstruction point, if $(x,n)$ is a $\lambda$-obstruction orbit segment for any $n\in\mathbb{N^+}$.
\end{definition}  

\begin{lemma}\label{obstrction}
  For any $r\in(0,1)$ and $\varepsilon>0$, there exists $N_0=N_0(r,\varepsilon)$ such that for some $n_0\geq N_0$, if $(x,n_0)$ is a $r$-obstruction orbit segment, then $d(x,\Lambda(r))<\varepsilon$, where $\Lambda(r)$ is the set of all $r$-obstruction point.
\end{lemma}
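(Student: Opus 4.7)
The plan is to argue by contradiction using the compactness of $M(f,U)$ and the continuity of the extended bundle $F$. Suppose the conclusion fails: then there exist $r\in(0,1)$, $\varepsilon>0$, a sequence of integers $n_k\to\infty$, and points $x_k\in M(f,U)$ such that each orbit segment $(x_k,n_k)$ is an $r$-obstruction orbit segment while $d(x_k,\Lambda(r))\geq\varepsilon$ for every $k$.

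Since $M(f,U)$ is compact (being the maximal invariant set in an admissible neighborhood), I would extract a convergent subsequence $x_k\to x_\infty\in M(f,U)$, and the goal is to show $x_\infty\in\Lambda(r)$, which will contradict $d(x_k,\Lambda(r))\geq\varepsilon$. Fix any $k_0\in\mathbb{N}^+$. For every sufficiently large $k$ we have $n_k\geq k_0$, and hence
$$\prod_{j=1}^{k_0}\bigl\|Df^{-1}|_{F_{f^j(x_k)}}\bigr\|\geq r^{k_0}.$$
Because the dominated splitting extends continuously to $M(f,U)$, the map $y\mapsto \|Df^{-1}|_{F_y}\|$ is continuous on $M(f,U)$; combined with the continuity of $f$, each factor $\|Df^{-1}|_{F_{f^j(x_k)}}\|$ converges to $\|Df^{-1}|_{F_{f^j(x_\infty)}}\|$ as $k\to\infty$. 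Passing to the limit in the displayed inequality yields $\prod_{j=1}^{k_0}\|Df^{-1}|_{F_{f^j(x_\infty)}}\|\geq r^{k_0}$. Since $k_0$ was arbitrary, $x_\infty$ is an $r$-obstruction point, so $x_\infty\in\Lambda(r)$.

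On the other hand, $d(x_k,x_\infty)\to 0$ forces $d(x_k,\Lambda(r))\to 0$, contradicting $d(x_k,\Lambda(r))\geq\varepsilon$. This yields the required $N_0=N_0(r,\varepsilon)$.

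The only delicate point is the continuity of $y\mapsto \|Df^{-1}|_{F_y}\|$ at the limit point $x_\infty$; this rests on the fact recalled before Definition~2 of hyperbolic time that the dominated splitting $E\oplus_{<} F$ over $\Lambda$ admits a unique continuous extension to $M(f,U)$. With that in hand the argument is a routine compactness-plus-continuity exercise, and no quantitative control of $N_0$ in terms of $r$ and $\varepsilon$ is needed beyond existence.
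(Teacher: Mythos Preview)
Your argument is correct and is essentially the sequential-compactness version of the paper's proof: the paper defines $\Lambda_N(r)=\{x:(x,N)\text{ is an }r\text{-obstruction segment}\}$, observes these are closed sets decreasing to $\Lambda(r)$, and picks $N_0$ so that $\Lambda_{N_0}(r)$ lies in the $\varepsilon$-neighborhood of $\Lambda(r)$. Your contradiction argument unwinds exactly this nested-compact-sets fact via a convergent subsequence, so the two approaches are the same idea in different packaging.
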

\begin{proof}
  Let $\Lambda_N(r)$ be the set of points such that the orbit segment $(x,N)$ is a $r$-obstruction orbit segment. Then $\Lambda(r)=\bigcap_{N>0}\Lambda_N(r)$. By the Definition \ref{obstrction point}, one has that  $\Lambda_{N}(r)\supset\Lambda_{N+1}(r)$. Therefore, this is a decreasing intersection of a compact set. For $\varepsilon>0$, taking $N_0=N_0(r,\varepsilon)$ such that $\Lambda_{N_0}(r)$ is contained in the $\varepsilon$-neighborhood of $\Lambda(r)$. Then, $d(\Lambda_{N_0}(r),\Lambda(r))<\varepsilon$. For some $n_0\geq N_0$, if $(x,n_0)$ is a $r$-obstruction orbit segment, then $x\in\Lambda_{N_0}(r)$. Hence, $d(x,\Lambda(r))<\varepsilon$.
\end{proof}    
  
\begin{lemma}\label{infty}
  Assume that the homoclinic class $H(p)$ of hyperbolic periodic point $p$ admits a dominated splitting $T_{H(p)}M=E\bigoplus_< F$ and $F$ is uniformly $\lambda$-expanding at the period on the set of periodic points homoclinically related to $p$. If there is a $r$-obstruction point $b\in H(p)$ with $r\in(\lambda,1)$, then there exists a sequence periodic points $\{q_n: n\in\mathbb{N}^+\}\subset H(p)$ homoclinically related to $p$, such that $\lim_{n\rightarrow\infty}q_n=b$ and $HT_1(q_n,\mu)\rightarrow\infty$ as $n\rightarrow\infty$, where $\lambda<\mu<r$. Moreover, $\Gamma_2(q_n,\mu)-\Gamma_1(q_n,\mu)\rightarrow\infty$ as $n\rightarrow\infty$.
\end{lemma}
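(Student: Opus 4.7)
The plan is as follows. Since $b\in H(p)=\overline{\{q\in P(f):q\sim p\}}$, I pick a sequence of hyperbolic periodic points $q_n\sim p$ with $q_n\to b$ and $\pi(q_n)\to\infty$; the latter can be arranged by the standard density of periodic points of arbitrarily large period in a homoclinic class. I will show that, after refining if necessary, both $HT_1(q_n,\mu)\to\infty$ and $\Gamma_2(q_n,\mu)-\Gamma_1(q_n,\mu)\to\infty$. The first convergence is driven by continuity together with the obstruction property of $b$; the second then follows by a purely combinatorial periodicity argument using only that $q_n$ has period $\pi(q_n)$.

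For the first convergence, fix $\epsilon>0$ so small that $re^{-\epsilon}>\mu$, which is possible because $r>\mu$. Given any $N\in\mathbb{N}$, continuity of $f$ and of $x\mapsto\|Df^{-1}|_{F_x}\|$ on the compact extension $M(f,U)$ of the dominated splitting ensures that for all $n$ sufficiently large one has $\|Df^{-1}|_{F_{f^j(q_n)}}\|\geq e^{-\epsilon}\|Df^{-1}|_{F_{f^j(b)}}\|$ for every $j\leq N$. Combined with the $r$-obstruction bound $\prod_{j=1}^{k}\|Df^{-1}|_{F_{f^j(b)}}\|\geq r^{k}$, this yields
\[
\prod_{j=1}^{k}\|Df^{-1}|_{F_{f^j(q_n)}}\|\;\geq\;(re^{-\epsilon})^{k}\;>\;\mu^{k},\qquad k=1,\ldots,N.
\]
This already violates the $k=0$ instance of the $\mu$-hyperbolic-time condition at every index $k\leq N$, so $HT_1(q_n,\mu)>N$. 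A diagonal choice of $N=N_n\to\infty$ then yields $HT_1(q_n,\mu)\to\infty$.

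For the second convergence, I claim $\Gamma_2(q_n,\mu)\geq\pi(q_n)+HT_1(q_n,\mu)$. Suppose for contradiction that some $m\in\bigl(\pi(q_n),\,\pi(q_n)+HT_1(q_n,\mu)\bigr)$ is a $\mu$-hyperbolic time of $q_n$, and set $m':=m-\pi(q_n)\in(0,HT_1(q_n,\mu))$. For any $k'\in\{0,1,\ldots,m'-1\}$, let $k:=k'+\pi(q_n)\in\{\pi(q_n),\ldots,m-1\}$; since $f^{j+\pi(q_n)}(q_n)=f^j(q_n)$, the hyperbolic condition for $m$ at the index $k$ gives
\[
\prod_{j=k'+1}^{m'}\|Df^{-1}|_{F_{f^j(q_n)}}\|\;=\;\prod_{j=k+1}^{m}\|Df^{-1}|_{F_{f^j(q_n)}}\|\;\leq\;\mu^{m-k}\;=\;\mu^{m'-k'}.
\]
So $m'$ is itself a $\mu$-hyperbolic time of $q_n$ strictly smaller than $HT_1(q_n,\mu)$, contradicting the minimality of $HT_1$. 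Combined with the trivial bound $\Gamma_1(q_n,\mu)\leq\pi(q_n)$, the claim gives $\Gamma_2(q_n,\mu)-\Gamma_1(q_n,\mu)\geq HT_1(q_n,\mu)\to\infty$.

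The delicate ingredient is the first step, where one must convert the hereditary obstruction inequality for the single point $b$ into a uniform multiplicative lower bound for the long initial orbit segments of $q_n$. The strict inequality $r>\mu$ is exactly what absorbs the continuity error $\epsilon$, which explains why the hypothesis requires $\mu$ strictly between $\lambda$ and $r$. The $\lambda$-expanding-at-the-period hypothesis itself plays only a background role: it ensures via Pliss applied to each full period of $q_n$ that $HT_1$, $\Gamma_1$ and $\Gamma_2$ are well-defined for $n$ large, so that the combinatorial step of the second paragraph makes sense.
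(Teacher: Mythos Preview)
Your proof is correct and follows essentially the same approach as the paper: a continuity argument showing that any $\mu$-hyperbolic time of $q_n$ within a fixed range $[1,N]$ would pass to the limit $b$ and contradict the $r$-obstruction property, together with the periodicity observation $\Gamma_2(q_n,\mu)-\Gamma_1(q_n,\mu)\geq\Gamma_2(q_n,\mu)-\pi(q_n)\geq HT_1(q_n,\mu)$. The only cosmetic differences are that the paper phrases the first step as a contradiction via subsequence extraction rather than your direct $\epsilon$-estimate, and that the paper asserts the inequality $\Gamma_2-\pi\geq HT_1$ without the detailed shift argument you supply.
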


\begin{proof}
  Since $b\in H(p)$, by the definition of $H(p)\triangleq\overline{\{q:q\in P(f),q\thicksim p\}}$, there is a sequence of periodic points $\{q_n:n\in\mathbb{N}^+\}\subset H(p)$ homoclinically related to $p$, such that $\lim_{n\rightarrow\infty}q_n=b$. As $F$ is uniformly $\lambda$-expanding at the period on the set of periodic points homoclinically related to $p$, there are two constants $C>0,~\lambda\in (0,1)$ such that $\prod^{\pi(q_n)}_{j=1}\parallel Df^{-1}\mid_{F_{f^j(p_n)}}\parallel\leq C\lambda^{\pi(q_n)}$, for every $n\in\mathbb{N}^+$. Let $a_i=\log\|Df^{-1}|_{F_{f^i(q_n)}}\|$, one has that $\liminf\limits_{m\rightarrow\infty}\frac{1}{m}\sum\limits_{i=1}^{m}a_i\leq \log\lambda$. By the Lemma \ref{many hyperbolic time}, one gets that $q_n$ has infinitely many $\mu$-hyperbolic times. Now, we prove that $HT_{1}(q_n,r_2)\rightarrow\infty$ as $n\rightarrow\infty$ by contradiction. Otherwise, we suppose that there exists constant $C_1>0$ such that $HT_{1}(q_n,\mu)\leq C_1$, for all $n$. Then there is a subsequence $\{q_{n_k}\}$ of $\{q_n\}$, such that $HT_{1}(q_{n_k},\mu)$ is constant, denoting by $L$. Hence for all $q_{n_k}$, one has that $\prod_{i=1}^{L}\|Df^{-1}|_{F_{f^i(q_{n_k})}}\|\leq\mu^L$. By the continuous of $Df^{-1}$, one gets that $$\prod_{i=1}^{L}\|Df^{-1}|_{F_{f^i(b)}}\|=\lim_{k\rightarrow\infty}\prod_{i=1}^{L}\|Df^{-1}|_{F_{f^i(q_{n_k})}}\|\leq\mu^L\leq r^L.$$ This means that $b$ is not a $r$-obstraction point which contradicts our condition. Since $$\Gamma_2(q_n,\mu)-\Gamma_1(q_n,\mu)\geq\Gamma_2(q_n,\mu)-\pi(q_n)\geq HT_{1}(q_n,\mu),$$ one has that $\Gamma_2(q_n,\mu)-\Gamma_1(q_n,\mu)\rightarrow\infty$ as $n\rightarrow\infty$.
\end{proof}
 
\subsection{Exponents properties in Homoclinic class}
  
  In this section, let $p$ be a hyperbolic periodic point. The homoclinic class $H(p)$ admits a dominated splitting $T_{H(p)}M=E\bigoplus_< F$ with dim$(E)=$ind$(p)$. We assume that the bundle $E$ is thin trapped and $f$ is uniformly $F$-expanding at the period on the set of periodic points homoclinically related to $p$. First, we introduce some properties of $C^1$ diffeomerphisms. 

\begin{lemma}\label{neighbor of contraction}
   Let $f$ be a $C^1$ diffeomorphism on a compact manifold $M$. For any $x\in M$, if there is $C=C(x)>0$ and $\mu_1,~\lambda_1\in(0,1)$ such that $C\mu^n_1\leq\prod_{i=0}^{n-1}\parallel Df_{f^{i}x}\parallel\leq C\lambda_1^{n},$ $ \text { for every } n\in\mathbb{N}^+,$ then for any $\mu_2<\mu_1<\lambda_1<\lambda_2$, there is $C_0=C_0(x)$ and $r=r(\mu_1,\mu_2,\lambda_1,\lambda_2)$ such that $$C_0\mu^n_2\leq\prod_{i=0}^{n-1}\parallel Df_{f^{i}y}\parallel\leq C_0\lambda_2^n,~\forall~y\in B(x,r),~\forall~n\in\mathbb{N}^+.$$
\end{lemma}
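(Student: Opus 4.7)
The plan is a uniform-continuity argument: transfer the orbital derivative bound at $x$ to a neighborhood by comparing $\log\|Df_{f^iy}\|$ with $\log\|Df_{f^ix}\|$ one factor at a time, and absorb the drift into the slack $\mu_1-\mu_2$ and $\lambda_2-\lambda_1$.

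First I would fix $\epsilon>0$ with $\epsilon\le\min\{\log(\mu_1/\mu_2),\,\log(\lambda_2/\lambda_1)\}$. Since $f$ is $C^1$ on the compact manifold $M$, the function $z\mapsto\log\|Df_z\|$ is bounded and uniformly continuous, so there is $\delta>0$ such that $d(z,w)<\delta$ forces $|\log\|Df_z\|-\log\|Df_w\||<\epsilon$. If a radius $r$ can be chosen so that every $y\in B(x,r)$ satisfies $d(f^iy,f^ix)<\delta$ for all $i\ge 0$, then summing the pointwise bound over $i=0,\dots,n-1$ and exponentiating gives
\[
  e^{-n\epsilon}\prod_{i=0}^{n-1}\|Df_{f^ix}\|\ \le\ \prod_{i=0}^{n-1}\|Df_{f^iy}\|\ \le\ e^{n\epsilon}\prod_{i=0}^{n-1}\|Df_{f^ix}\|,
\]
which combined with the hypothesis $C\mu_1^n\le\prod_{i=0}^{n-1}\|Df_{f^ix}\|\le C\lambda_1^n$ and the choice of $\epsilon$ yields the conclusion with $C_0=C$.

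The main obstacle is producing such an $r$ uniformly in $n$. A priori the orbits of $y$ and $x$ can separate at rate bounded only by $\|Df\|_\infty^{\,i}$, which would force $r$ to shrink as $n$ grows. To beat this I would exploit the hypothesis itself: the bound $\prod\|Df_{f^ix}\|\le C\lambda_1^n$ with $\lambda_1<1$ supplies ``infinitesimal contraction'' along the orbit of $x$, so nearby orbits should be pulled toward it. I expect to make this rigorous either by constructing an adapted (Lyapunov) metric along the orbit of $x$ in which $Df$ becomes uniformly contracting with some rate in $(\lambda_1,\lambda_2)$, or by a block-by-block shadowing argument keyed to the Pliss hyperbolic times of $x$ (Lemmas \ref{Pliss} and \ref{many hyperbolic time}); the cost of the metric adaptation is absorbed into the $x$-dependent constant $C_0(x)$. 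Once the shadowing radius is in hand, the remaining book-keeping between the two pairs of exponential rates is routine.
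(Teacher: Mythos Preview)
Your plan matches the paper's approach: compare the factors $\|Df_{f^iy}\|$ to $\|Df_{f^ix}\|$ via uniform continuity, and use the contraction $\lambda_1<1$ in the hypothesis to keep the orbit of $y$ within a fixed distance of the orbit of $x$ for all forward times. The paper executes the ``orbits stay close'' step by a bare induction---assuming $d(f^jx,f^jy)\le r_1$ for $j\le m$ and bounding $d(f^{m+1}x,f^{m+1}y)$ by $C\lambda_2^{m+1}r$ via the factorwise ratio estimate---so neither an adapted metric nor Pliss times are actually needed; once you have the ratio bound $\|Df_{f^jy}\|\le(\lambda_2/\lambda_1)\|Df_{f^jx}\|$ on the inductive hypothesis, the product bound $\prod_{i=0}^{m}\|Df_{f^ix}\|\le C\lambda_1^{m+1}$ immediately closes the induction with $r=\min\{r_1,r_1/C\}$.
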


\begin{proof}
   Since $f$ is $C^1$ diffeomorphism, for $0<\mu_2<\mu_1<\lambda_1<\lambda_2<1$, there is $r_1>0$ such that $$\frac{\mu_2}{\mu_1}\leq \frac{\parallel Df_y\parallel}{\parallel Df_x\parallel}\leq \frac{\lambda_2}{\lambda_1},~{\rm\ for \ any \ } d(x,y)\leq r_1.$$ For any $y\in B(x,r_1)$, by the Mean Value Theorem, there exists $\xi\in B(x,r_1)$ such that $d(f(x),f(y))= \parallel Df_{\xi}\parallel\cdot d(x,y)$. Since $\xi\in B(x,r_1)$, one has that $$d(f(x),f(y))\leq\frac{\lambda_2}{\lambda_1}\parallel Df_x\parallel\cdot d(x,y)\leq C\frac{\lambda_2}{\lambda_1}\lambda_1 r_1= C\lambda_2r_1.$$  Taking $r=\min\{r_1,r_1/C\}$, $C_0=\max\{C,1/C\}$, we claim that $d(f^j(x),f^j(y))\leq r_1$, for every $y\in B(x,r)$ and every $j\in\mathbb{N}^+$. Next, we prove the claim. We may assume that $d(f^j(x),f^j(y))\leq r_1$ for any $y\in B(x,r)$ and every $j=1,2,\cdots,m$. Then, for any $y\in B(x,r)$, by the Mean Value Theorem, there is $\eta\in B(x,r)$ such that 
\begin{eqnarray*}
	d(f^{m+1}(x),f^{m+1}(y))&=&\parallel Df^{m+1}_\eta\parallel d(x,y) \leq \parallel Df_{f^m\eta}\parallel \parallel Df_{f^{m-1}\eta}\parallel\cdots\parallel Df_\eta\parallel d(x,y)\\
	&\leq & \frac{\lambda_2}{\lambda_1}\parallel Df_{f^mx}\parallel\frac{\lambda_2}{\lambda_1}\parallel Df_{f^{m-1}x}\parallel\cdots\frac{\lambda_2}{\lambda_1}\parallel Df_x\parallel d(x,y)\\
	&\leq & \frac{\lambda_2^{m+1}}{\lambda_1^{m+1}}\prod_{i=0}^m\parallel Df_{f^i  x}\parallel r\leq C\frac{\lambda_2^{m+1}}{\lambda_1^{m+1}}\lambda_1^{m+1}r=C\lambda_2^{m+1}r<r_1
\end{eqnarray*}
	Therefore, $f^{m+1}(B(x,r))\subset B(f^{m+1}(x),r_1)$. This proves our claim. Therefore, for $y\in B(x,r)$ and $n\in\mathbb{N}^+$, one has that   $$\frac{\mu^{n+1}_2}{\mu^{n+1}_1}\leq\frac{\prod_{i=0}^n\parallel Df_{f^iy}\parallel}{\prod_{i=0}^n\parallel Df_{f^ix}\parallel}\leq\frac{\lambda_2^{n+1}}{\lambda_1^{n+1}}.$$ Consequently, one has that $C_0\mu_2^n\leq\prod_{i=0}^{n-1}\parallel Df_{f^iy}\parallel\leq C_0\lambda_2^n$, for $y\in B(x,r)$ and $n\in\mathbb{N}^+$.
\end{proof}

\begin{proposition}\label{USM}
	Assume that the homoclinic class $H(p)$ admits a dominated splitting $T_{H(p)}M=E\oplus_<F$ with dim$E=$ind $(p)$. If $f$ is uniformly $F$-expanding at the period on all periodic points homoclinically related to $p$, then there exists constant $N\in\mathbb{N}^+$ such that a local plaque family tangent to the bundle $F$ are the unstable manifolds of all hyperbolic periodic points with period larger than $N$.
\end{proposition}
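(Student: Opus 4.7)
The plan is to show that at any hyperbolic periodic point $q\sim p$ of sufficiently large period, the plaque $\mathcal{W}^F_q$ tangent to $F$ coincides, near $q$, with the local unstable manifold $W^u_{\mathrm{loc}}(q)$. Since $q$ is homoclinically related to $p$, one has $\mathrm{ind}(q)=\mathrm{ind}(p)=\dim E$, so $F_q$ is exactly the unstable subspace at $q$ and $\mathcal{W}^F_q$ is tangent to it. By classical invariant manifold theory (\cite{HPS}), $W^u_{\mathrm{loc}}(q)$ is characterized as the unique locally $f^{-1}$-invariant $C^1$ disk through $q$ tangent to $F_q$ on which $Df^{-1}$ acts as a uniform contraction; thus the proof reduces to verifying such backward contraction along $\mathrm{orb}(q)$.

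The core step is to absorb the multiplicative constant $C$ in the hypothesis $\prod_{j=1}^{\pi(q)}\|Df^{-1}|_{F_{f^j(q)}}\|\leq C\lambda^{\pi(q)}$ by forcing the period to be large. Fix any $\mu\in(\lambda,1)$ and choose $N$ with $C\lambda^N\leq\mu^N$, for instance $N>\log C/\log(\mu/\lambda)$. Then for every periodic $q\sim p$ with $\pi(q)>N$,
\begin{equation*}
\|Df^{-\pi(q)}|_{F_q}\|\leq\prod_{j=1}^{\pi(q)}\|Df^{-1}|_{F_{f^j(q)}}\|\leq C\lambda^{\pi(q)}\leq\mu^{\pi(q)}.
\end{equation*}
Iterating by full periods yields $\|Df^{-k\pi(q)}|_{F_q}\|\leq\mu^{k\pi(q)}$ for all $k\geq 1$. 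For arbitrary $n=k\pi(q)+r$ with $0\leq r<\pi(q)$, decomposing $Df^{-n}$ along the periodic orbit and using the global bound $K=\sup_M\|Df^{-1}\|$ gives $\|Df^{-n}|_{F_{f^n(q)}}\|\leq(K/\mu)^{\pi(q)}\mu^{n}$, i.e., uniform exponential backward contraction on $F$ along $\mathrm{orb}(q)$ (with a $q$-dependent constant, which is fine since we only need uniqueness at each individual $q$).

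With this backward contraction in hand, $\mathcal{W}^F_q$ is a locally invariant $C^1$ disk through $q$ tangent to the uniformly backward-contracted direction $F_q$, and the standard uniqueness statement for unstable disks in \cite{HPS} forces $\mathcal{W}^F_q=W^u_{\mathrm{loc}}(q)$ on a neighborhood of $q$. The main obstacle is precisely the interplay between $C$ and $\pi(q)$: for short periods, $(C\lambda^{\pi(q)})^{1/\pi(q)}$ can be arbitrarily close to $1$, so no per-iterate exponential estimate survives, and one genuinely needs $N$ large to convert the \emph{at the period} expansion into honest per-iterate backward contraction. Once that is achieved, the passage from the per-period estimate to an arbitrary $n$ and the identification of the plaque with the unstable manifold are routine.
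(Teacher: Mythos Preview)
Your proof is correct and follows the same strategy as the paper: both use the uniform $F$-expanding hypothesis together with the threshold $\pi(q)>N$ to absorb the constant $C$ and obtain genuine exponential backward contraction of $Df^{-1}$ on $F$ along $\mathrm{orb}(q)$, which forces $F_q=F^u_q$ and hence $\mathcal{W}^F_q=W^u_{\mathrm{loc}}(q)$. The only difference is cosmetic---the paper packages the key step as a contradiction (assume some $v\in F$ has a nonzero $E^s$-component and compare the two growth rates of $\|Df^{-m\pi(q)}v\|$), whereas you give the direct submultiplicative estimate; note, however, that your opening assertion ``$F_q$ is exactly the unstable subspace at $q$'' is really the \emph{conclusion} of your backward-contraction computation rather than a prior fact (unless you invoke uniqueness of dominated splittings of a given index, which you do not), so the exposition would be cleaner with that claim postponed.
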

\begin{proof}
	Since $f$ is uniformly $F$-expanding, by the Definition \ref{Def:expanding}, there are two constants $C>0,~\lambda\in (0,1)$ such that $$\prod^{\pi(x)}_{j=1}\parallel Df^{-1}\mid_{F_{f^j(x)}}\parallel\leq C\lambda^{\pi(x)},~\text{for any hyperbolic periodic point $x\in H(p)$.}$$ Let $N$ be the smallest constant which satisfies that $C\lambda^N<1$, define $$A\triangleq\{x\in H(p):x \text{ is periodic point with period larger than $N$}\}.$$ For any $x\in A$ and $\varepsilon>0$, since $T_{H(p)}M=E\oplus_<F$ is a dominated splitting, by Plaque Family Theorem \cite[Theorem 5.5]{HPS}, there always exists a locally invariant plaque family tangent to the bundle $E$ and $F$. Denote by $\mathcal{W}_{\varepsilon}^{F}(x)$ the plaque family tangent to the bundle $F$ at point $x$. Since $x$ is a hyperbolic periodic point, we may assume that $T_xM=E^s\bigoplus F^u$ is the hyperbolic splitting. By the Definition \ref{Def:hyperbolic}, there are constants $C_1>0$, $\lambda_1\in (0,1)$ such that for every $n\in \mathbb{N}$, one has that 
	$$\parallel Df^n\mid_{E^s}\parallel\leq C_1\lambda_1^n,~~\parallel Df^{-n}\mid_{F^u}\parallel\leq C_1\lambda_1^n.$$ Since dim$E=$ind $(p)$, we only need to prove that $F\subseteq F^u$. If $F\nsubseteq F^u$, then there is a vector $v\in F$ such that $v\notin F^u$. Thus, one has a decomposition $v=v^s\oplus v^u$, where $0\neq v^s \in E^s$ and $v^u\in F^u$. Therefore, for every $m\in\mathbb{N}^+$, one has that $$C_1^{-1}\lambda_1^{-m\pi(x)}\lVert v^s\rVert\leq \lVert Df^{-m\pi(x)}v\rVert\leq (\prod^{m\pi(x)}_{j=1}\parallel Df^{-1}\mid_{F_{f^j(x)}}\parallel) \lVert v\rVert\leq (C\lambda^{\pi(x)})^m\lVert v\rVert.$$  For $m$ large enough, we have that $C_1\lambda_1^{-m\pi(x)}\lVert v^s\rVert>1$ and $(C\lambda^{\pi(x)})^m\lVert v\rVert<1$. This means that our assumption $F\nsubseteq F^u$ is fault. Consequently, $F\subseteq F^u$. 
\end{proof}

   Next, we introduce that in the homoclinic class $H(p)$, there is a dense set such that every point in this set has stable manifolds of uniformly size. Given $\varepsilon>0$, a sequence points $\{x_0,\cdots, x_m\}$ is called a periodic $\varepsilon$-orbit or periodic pseudoorbit, if $x_m=x_0$ and $d(f(x_i),x_{i+1})<\varepsilon$, for $i=0,\cdots,m-1$.
   
\begin{theorem} \label{AC}(\cite{KB}, Anosov Closing Lemma)
   If $\Lambda$ is a hyperbolic set of diffeomorphism $f$, then there is an open neighborhood $U$ of $\Lambda$ and two constants $C>0$, $\varepsilon_0>0$ such that for any $\varepsilon\in(0,\varepsilon_0)$ and any periodic $\varepsilon$-orbit $\{x_0,\cdots,x_m\}\subset U$, there exist a periodic point $y\in U$ satisfying $f^m(y)=y$ and $d(f^k(y),x_k)<C\varepsilon$, for $k=0,1,\cdots,m-1$.  	
\end{theorem}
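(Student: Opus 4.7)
The plan is to reduce the statement to a cyclic fixed-point problem set up in exponential charts along the pseudo-orbit and to solve it by contraction mapping, using the uniform hyperbolicity of $\Lambda$. First I would extend the invariant splitting $T_\Lambda M=E^s\oplus E^u$ to a continuous (not necessarily invariant) splitting on a small open neighborhood $U$ of $\Lambda$. After shrinking $U$ and $\varepsilon_0$, and possibly adapting the Riemannian metric, I can arrange that on any orbit segment staying in $U$ the splitting is almost invariant under $Df$ and satisfies uniform norm bounds $\|Df|_{E^s}\|\le\lambda$ and $\|Df^{-1}|_{E^u}\|\le\lambda$ for some $\lambda\in(0,1)$.

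Second, for the periodic $\varepsilon$-orbit $\{x_0,\dots,x_m=x_0\}\subset U$, identify a neighborhood of $x_k$ with a neighborhood of the origin in $T_{x_k}M=E^s_{x_k}\oplus E^u_{x_k}$ via $\exp_{x_k}$. In these charts $f$ reads
\[
F_k(v):=\exp_{x_{k+1}}^{-1}\circ f\circ\exp_{x_k}(v)=A_k v+c_k+R_k(v),
\]
where $A_k$ is the linear part (up to parallel transport between fibres), $c_k:=\exp_{x_{k+1}}^{-1}(f(x_k))$ satisfies $\|c_k\|\le\varepsilon$ by the pseudo-orbit hypothesis, and $R_k$ is smooth with $R_k(0)=0$ and $\|DR_k(v)\|$ arbitrarily small on a ball of radius $O(\varepsilon_0)$. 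A periodic orbit $y_k=\exp_{x_k}(v_k)$ shadowing $(x_k)$ corresponds to a cyclic solution of $F_k(v_k)=v_{k+1}$, i.e.\ $\mathcal L(v)=c+R(v)$ on the finite-dimensional Banach space $X:=\bigoplus_{k=0}^{m-1}T_{x_k}M$ with the sup norm, where $\mathcal L(v)_k:=v_{k+1}-A_k v_k$.

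Third, I would invert $\mathcal L$ with a bound independent of $m$. Decomposing $v_k=v_k^s+v_k^u$ along the splitting, the equation $\mathcal L(v)=w$ uncouples into two recursions. On the stable component, iterating forward once around the cycle yields $(I-B^s)v_0^s=\sum_j A_{m-1}^s\cdots A_{j+1}^s w_j^s$ with $B^s:=A_{m-1}^s\cdots A_0^s$ of norm $\le\lambda^m$, so $(I-B^s)^{-1}$ exists with norm $\le 1/(1-\lambda)$ and $\|v^s\|_\infty\le K_1\|w\|_\infty$ for $K_1=K_1(\lambda)$; the unstable component is handled symmetrically by iterating backward using $\|A_k^{-1}|_{E^u}\|\le\lambda$. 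This gives $\|\mathcal L^{-1}\|\le K=K(\lambda)$ independent of $m$. The full nonlinear equation $v=\mathcal L^{-1}(c+R(v))$ is then a contraction on the closed ball of radius $2K\varepsilon$ once $\varepsilon_0$ is small enough, and the Banach fixed-point theorem produces $v^\ast$ with $\|v_k^\ast\|\le 2K\varepsilon$; $y:=\exp_{x_0}(v_0^\ast)$ satisfies $f^m(y)=y$ and $d(f^k(y),x_k)\le C\varepsilon$ with $C$ absorbing the bi-Lipschitz constants of the exponential maps.

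\emph{Main obstacle.} The technical crux is proving that $\|\mathcal L^{-1}\|$ is bounded by a constant independent of the period $m$. This forces the extension of the hyperbolic splitting to $U$ to be compatible enough with the invariant one on $\Lambda$ that the forward/backward recursions inherit the original contraction rate on the stable and unstable parts. The periodicity of the pseudo-orbit is what converts the problem into a finite cyclic system and makes the geometric sums close up without the boundary corrections that would otherwise degrade as $m\to\infty$.
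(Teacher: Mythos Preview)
The paper does not prove this statement at all: Theorem~\ref{AC} is stated with a citation to \cite{KB} (Katok--Hasselblatt) and is used as a black box in the proof of Theorem~\ref{SM}. There is therefore no proof in the paper to compare your proposal against.

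That said, your outline is the standard argument for the Anosov Closing Lemma as presented, for instance, in \cite{KB}: extend the hyperbolic splitting continuously to a neighborhood, linearize in exponential charts along the pseudo-orbit, reduce the existence of a shadowing periodic orbit to a fixed-point equation $v=\mathcal L^{-1}(c+R(v))$, bound $\|\mathcal L^{-1}\|$ independently of the period $m$ by summing the forward/backward geometric series along the stable/unstable components (the cyclicity making the sums close up with the factor $(I-B)^{-1}$), and conclude by the contraction mapping theorem. The identification of the uniform bound on $\|\mathcal L^{-1}\|$ as the technical crux is accurate. One small point to tighten: since the extended splitting on $U$ is only approximately $Df$-invariant, the block-triangular form of $A_k$ has off-diagonal terms; you should either absorb these into $R_k$ or check that the stable/unstable recursions remain decoupled up to an error controlled by the size of $U$, which is routine but worth stating explicitly.
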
 

\begin{theorem}\label{SM}
	Let $p$ be a hyperbolic periodic point of diffeomorphism $f$. Assume that the homoclinic class $H(p)$ has a dominated splitting $T_{H(p)}M=E\bigoplus_< F$ with dim$(E)=ind(p)$. If $E$ is thin trapped and $f$ is uniform $F$-expanding at the period on the set of periodic points which are homoclinically related to $p$, then for any $\varepsilon>0$, there is a $\varepsilon$-dense set $\mathcal{P}\subseteq H(p)$ of hyperbolic periodic points homoclinically related to the orbit of $p$, such that every point $q\in \mathcal{P}$ has stable manifolds of uniformly size.
\end{theorem}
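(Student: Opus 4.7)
The plan is to use the thin-trapped hypothesis on $E$ to produce $E$-plaques of uniformly positive size, and then to identify these plaques as pieces of the stable manifold at each periodic point homoclinically related to $p$; density of such periodic points in $H(p)$ furnishes the $\varepsilon$-net.

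First, by the very definition $H(p)=\overline{\{q\in P(f): q\sim p\}}$, the periodic points homoclinically related to $p$ are dense in $H(p)$, and compactness of $H(p)$ yields a finite subset $\mathcal{P}\subseteq\{q\in P(f): q\sim p\}$ that is $\varepsilon$-dense in $H(p)$. The task therefore reduces to exhibiting a single $\delta_0>0$ such that $W^s(q)$ contains a disk of radius $\delta_0$ around $q$ for every $q\in\mathcal{P}$.

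Second, apply the thin-trapped hypothesis to a sufficiently small neighborhood $U$ of the zero section of $E$, producing a continuous family $\{\varphi_x\}_{x\in H(p)}$ of $C^1$-diffeomorphisms supported in $U$ and a constant $\delta>0$ such that $f(\overline{\mathcal{W}_x\circ\varphi_x(B(0,\delta))})\subseteq\mathcal{W}_{fx}\circ\varphi_{fx}(B(0,\delta))$ for every $x\in H(p)$. Set $\widetilde{\mathcal{W}}(x)=\mathcal{W}_x\circ\varphi_x(B(0,\delta))$. By continuity of the plaque family $\mathcal{W}$ and of the diffeomorphisms $\varphi_x$, together with compactness of $H(p)$, there exists $\delta_0>0$ such that each $\widetilde{\mathcal{W}}(x)$ contains a Riemannian disk of radius $\delta_0$ around $x$ in the plaque.

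Third, for any $q\in\mathcal{P}$: since $q\sim p$ we have $\mathrm{ind}(q)=\dim E$, so the argument dual to that of Proposition \ref{USM} --- now comparing $E_q$ to the stable subspace $E^s_q$ at $q$ --- gives $E_q=E^s_q$. Iterating the trapping $\pi(q)$ times yields $f^{\pi(q)}(\overline{\widetilde{\mathcal{W}}(q)})\subseteq\widetilde{\mathcal{W}}(q)$, so the closed plaque is mapped strictly into its interior by $f^{\pi(q)}$ and is tangent to the stable direction at $q$. To conclude $\widetilde{\mathcal{W}}(q)\subseteq W^s(q)$, I plan to show that, provided $U$ is taken thin enough, the nested iterates $f^{n\pi(q)}(\overline{\widetilde{\mathcal{W}}(q)})$ collapse to $\{q\}$, for example by reducing to a linearizing neighborhood of $q$ on which $f^{\pi(q)}|_{E^s_q}$ is a strict contraction.

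The main obstacle is arranging a single thinness parameter $U$ (and hence a single $\delta_0$) that works for all $q\in\mathcal{P}$ simultaneously, in particular for periodic points of large period whose linearizing scale may be very small. A cleaner route, which I expect to carry out, is to avoid per-orbit linearization altogether and instead deduce uniform contraction of $\widetilde{\mathcal{W}}(x)$ for every $x\in H(p)$ directly from the domination $E\oplus_{<}F$ combined with the trapping, in the style of the plaque-family arguments of Bonatti-Crovisier \cite{BDV}. That would yield $\widetilde{\mathcal{W}}(x)\subseteq W^s(x)$ for all $x\in H(p)$, and hence a stable disk of radius at least $\delta_0$ at every $q\in\mathcal{P}$.
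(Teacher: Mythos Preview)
Your proposal has a genuine gap at the decisive step. The ``cleaner route'' you outline --- showing directly from domination and trapping that the fixed-size plaque $\widetilde{\mathcal{W}}(x)$ lies in $W^s(x)$ for every $x\in H(p)$ --- is not a known consequence of thin trapping, and you give no mechanism for it. Trapping only yields $f^n(\widetilde{\mathcal{W}}(x))\subseteq\widetilde{\mathcal{W}}(f^n x)$, so forward iterates of a plaque remain inside plaques of the \emph{same} bounded diameter; thin trapped lets you choose that diameter small to begin with, but provides no reason for a plaque of a \emph{fixed} size $\delta_0$ to shrink under iteration. For a periodic $q$, the nested intersection $\bigcap_n f^{n\pi(q)}(\overline{\widetilde{\mathcal{W}}(q)})$ is an $f^{\pi(q)}$-invariant compact set containing $q$, and collapsing it to $\{q\}$ is essentially an expansivity statement for the orbit of $q$ at scale $\delta_0$ --- precisely the period-dependent quantity you already flagged as the obstacle. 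If your route worked for all $x\in H(p)$ it would give every point of $H(p)$ a uniform-size stable manifold tangent to $E$, which is far closer to uniform contraction of $E$ than the thin-trapped hypothesis warrants.

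The paper proceeds quite differently: it does not take arbitrary periodic points homoclinically related to $p$, but \emph{manufactures} periodic points carrying a built-in contraction estimate on $E$. For each transverse homoclinic point $x\in W^s(p)\pitchfork W^u(p)$ the set $\mathrm{Orb}(p)\cup\mathrm{Orb}(x)$ is hyperbolic, and the Anosov Closing Lemma produces a periodic point $q_x$ shadowing the segment $\{f^{-m_1}(x),\dots,f^{m_1}(x)\}$. Choosing $m_1$ large forces this orbit to spend all but a bounded number $2m_0$ of iterates in the region where $\|Df|_{E}\|\leq\lambda_2$, yielding the product bound $\prod_{i=0}^{\pi(q_x)-1}\|Df|_{E_{f^i(q_x)}}\|\leq\lambda_3^{\pi(q_x)}$ with $\lambda_3<1$ independent of $x$ and of $\varepsilon$. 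The Pliss Lemma then singles out an iterate $q'$ on the orbit with $\prod_{i=0}^{m-1}\|Df|_{E_{f^i(q')}}\|\leq\lambda_4^{m}$ for \emph{every} $m\geq 1$; this all-time estimate gives a stable manifold at $q'$ of size $\delta=\delta(\lambda_4)$ independent of everything else. Only now is thin trapped invoked, and in the modest role one can actually justify: the trapped plaque at any other point $y$ on the orbit is carried forward into the plaque at $q'$, which has just been identified with $W^s_\delta(q')$, so every point on the orbit inherits a stable disk of the same uniform size $\delta$.
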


\begin{proof} 
	We may assume that $p$ is a hyperbolic fix point ( if not, consider $g=f^{\pi(p)}$). Since dim$(E)=ind(p)$, for the dominated splitting $T_pM=E_p\bigoplus_< F_p$, by \cite[Theorem 1]{GN07}, taking suitable Riemann norm, there exists $\lambda_1\in(0,1)$ such that $\|Df|_{E_p}\|\leq\lambda_1$ and $\|Df^{-1}|_{F_p}\|\leq\lambda_1$. Hereafter, we fixed the numbers $0<\lambda_1<\lambda_2<\lambda_3<\lambda_4<1$. Since $f$ is $C^1$, for $\lambda_2\in(\lambda_1,1)$, there is $r>0$ such that for any $x,y\in H(p)$ with $d(x,y)<r$, one has that $$\frac{\|Df|_{E_x}\|}{\|Df|_{E_y}\|}\leq\frac{\lambda_2}{\lambda_1} {\rm     ~~~~~~and~~~~~~     }\frac{\|Df^{-1}|_{F_x}\|}{\|Df^{-1}|_{F_y}\|}\leq\frac{\lambda_2}{\lambda_1}.$$ By the definition of $H(p)$, for any $\varepsilon<r$, one can take a $\frac{\varepsilon}{2}$-dense set $$B=\{x\in H(p):x\in W^s(p)\pitchfork W^u(p)\} {\rm ~in~ } H(p).$$
	
\begin{claim} 
	For every $x\in B$, $\Lambda_x\triangleq Orb(p)\bigcup Orb(x)$ is a hyperbolic set.
\end{claim}
	
\begin{proof} 
	Since $x\in W^s(p)\pitchfork W^u(p)$, there exists $n_0\in\mathbb{N}$ such that for any $n\geq n_0$, one has that $d(f^n(x),p)<r$ and $d(f^{-n}(x),p)<r$. Therefore,  $$\frac{\|Df|_{E_{f^n(x)}}\|}{\|Df|_{E_p}\|}\leq\frac{\lambda_2}{\lambda_1},~~~\frac{\|Df^{-1}|_{F_{f^{-n}(x)}}\|}{\|Df^{-1}|_{F_p}\|}\leq\frac{\lambda_2}{\lambda_1},\text{~~for any $n\geq n_0$}.$$ Let $C_1=\max\limits_{y\in H(p)}\{\frac{\|Df_y\|}{\lambda_2},\cdots,\frac{\|Df_y^{2n_0}\|}{\lambda_2^{2n_0}},\frac{\|Df^{-1}_y\|}{\lambda_2},\cdots,\frac{\|Df_y^{-2n_0}\|}{\lambda_2^{2n_0}}\}$, taking $C=\max\{1,C_1\}$, for any $y\in\Lambda_x$ and any $n\in\mathbb{N}$, one has that $$\|Df^n|_{E_y}\|\leq C\lambda_2^n,~~~\|Df^{-n}|_{F_y}\|\leq C\lambda_2^n.$$Hence, $\Lambda_x=Orb(p)\bigcup Orb(x)$ is a hyperbolic set.
\end{proof}
	
	Fixed $x\in B$, by the Theorem \ref{AC}, for the hyperbolic set $\Lambda_x$, there exist an open neighborhood $U\supset\Lambda_x$ and $C_2,~\varepsilon_0>0$, such that for any $\varepsilon_1\in(0,\varepsilon_0)$ and any periodic $\varepsilon_1$-orbit $\{x_0,\cdots,x_m\}\subset U$, there is a periodic point $y\in U$ satisfying $f^m(y)=y$ and $d(f^k(y),x_k)<C_2\varepsilon_1$, for $k=0,1,\cdots,m-1$. Taking $\varepsilon_2=\min \{\varepsilon/2C_2,~\varepsilon/2,~r\}$, since $x\in W^s(p)\pitchfork W^u(p)$, there exists $m_0\in\mathbb{N}$ such that for any $m\geq m_0$, one has that $d(f^m(x),p)<\varepsilon_2/2$ and $d(f^{-m}(x),p)<\varepsilon_2/2$. Let $K=\sup\limits_{y\in H(p)}\{\|Df_y\|,~2\}<+\infty$, for $\lambda_2<\lambda_3<1$, taking an integer $m_1\geq C/2\log(\lambda_2/\lambda_3)$, where $C=\log\lambda_3+(2m_0-2)\log\lambda_2-(2m_0-1)\log K$. One can get a $\varepsilon_2$-closed orbit segment: $$\{f^{-m_1}(x),f^{-m_1+1}(x),\cdots,f^{-m_0}(x),f^{-m_0+1}(x),\cdots,f^{-1}(x),x,f(x),\cdots,f^{m_0}(x),\cdots,f^{m_1}(x)\}.$$ By the Theorem \ref{AC}, the closed orbit segment can be $\varepsilon/2$-shadowed by a periodic point $q_x$ with $\pi(q_x)=2m_1+1$. Therefore, $$\prod_{i=0}^{\pi(q_x)-1}\|Df|_{E_{f^i(q_x)}}\| \leq K^{2m_0-1}\cdot\lambda_2^{2m_1-2m_0+2}\leq\lambda_3^{\pi(q_x)}.$$ Hence, one has that $$\prod_{i=0}^{n\pi(q_x)-1}\|Df|_{E_{f^i(q_x)}}\|\leq\lambda_3^{n\pi(q_x)},~\text{for any $n\in\mathbb{N}$ }.$$ Let $\mathcal{P}=\{q_x:x\in B\}$, then $\mathcal{P}\subseteq H(p)$ is a set of hyperbolic periodic points homoclinically related to the orbit of $p$. For $\lambda_3<\lambda_4$, any $q\in \mathcal{P}$ and any $n\in\mathbb{N}$, by the Lemma \ref{Pliss}, there are $\theta=\theta(\lambda_3,\lambda_4)\in(0,1)$ and positive integers $n_1<n_2<\cdots<n_l\leq n$ with $l\geq \theta n\pi(q)$, such that $$\prod_{i=k}^{n_j-1}\|Df|_{E_{f^i(q)}}\|\leq\lambda_4^{n_j-k},~{\rm for    }~\forall~k\in\{0,1,\cdots,n_j-1\},~j=1,2,\cdots,l.$$ Since $q$ is a periodic point, if $n\rightarrow\infty$, then there is a point $q'$ which is a iteration of $q$, such that  $$\prod_{i=0}^{m-1}\|Df|_{E_{f^i(q')}}\|\leq\lambda_4^m,\text{~for any $m\in\mathbb{N}$}.$$ It means that $q'$ has stable manifolds of $\delta$-size, where $\delta$ is only relate to $\lambda_4$. Since $E$ is thin trapped, for every $y\in\{q',\cdots,f^{\pi(q')-1}(q')\}$ and $\delta>0$, one has that $f^i(\mathcal{W}_{\delta}^{s}(y))\subset\mathcal{W}_{\delta}^{s}(f^iy)=\mathcal{W}_{\delta}^{s}(q')$, for some $i\in\{0,1,\cdots,\pi(q')-1\}$. Since $\mathcal{W}_{\delta}^{s}(q')=W^s_{\delta}(q')$, the point $y$ also has stable manifolds of $\delta$-size. Therefore, every point $x\in\mathcal{P}$ has stable manifolds of uniformly size.    
	    
	For any $b\in H(p)$, by the choice of the set $B$, there is $x\in B$ such that $d(b,x)<\frac{\varepsilon}{2}$. For this point $x$, there is a $q_x\in\mathcal{P}$ such that $d(x,q_x)<\frac{\varepsilon}{2}$. Then, $d(b,q_x)<\varepsilon$. Therefore, $\mathcal{P}$ is a $\varepsilon$-dense subset of $H(p)$.  
\end{proof}

\section{The closing property of thin trapped in $H(p)$}

  It is well-known that pseudoorbits near a hyperbolic set can be shadowed by a real orbit. This is called Pseudo-Orbit Tracing Property. This property plays an important role in the study of stability of dynamical systems (see \cite{Wen96}, \cite{Gan}, \cite{WL} and  \cite{Yu99} ). Gan \cite[Theorem 1.1]{GS} showed that quasi-hyperbolic pseudoorbits can be shadowed by a real orbit. In this section, we introduce the Pseudo-Orbit Tracing Property in $H(p)$. Before heading to the main block, we clarify some notations and identify some constants. 
     
\begin{Definition}\label{Def: thin}
  Assume that the homoclinic class $H(p)$ admits the dominated splitting $T_{H(p)} M=E\bigoplus_< F$, for $x\in H(p)$, $n\in\mathbb{N}^+$, an orbit arc $(x,n)\triangleq\{x,f(x),\cdots,f^{n-1}(x)\}$ is called $\lambda$-thin trapped, if $E$ is thin trapped and $$\prod\limits_{j=1}^{k}\|Df^{-1}|_{F_{f^{n-j}(x)}}\|\leq\lambda^k,~\forall~k=1,\cdots,n-1.$$
\end{Definition}

\begin{Definition}\label{Def: thin trapped}
   Assume that the homoclinic class $H(p)$ admits the dominated splitting $T_{H(p)} M=E\bigoplus_< F$, a finite number of orbit segment $\{(x_i,n_i)\}_{i=0}^{m}$ is called $(\lambda,\eta)$-thin trapped closed pseudoorbit, if $(x_i,n_i)$ is $\lambda$-thin trapped for $i=0,1,\cdots,m$ and $d(f^{n_i-1}(x_i),x_{i+1})\leq\eta$, for $i=0,1,\cdots,m-1$ and $d(f^{n_m-1}x_m,x_0)<\eta$.
\end{Definition}

\begin{Definition} 
   For $\delta>0$ and $N\in \mathbb{N}^+$, a sequence of points $\{x_1,x_2,\cdots,x_N\}$ is called $\delta$-shadowed by a periodic point $x$, if $N$ is the period of $x$ and $d(f^n(x),x_n)<\delta$, for $1\leq n\leq N$.
\end{Definition}

From the next proposition, we can see that the thin trapped homoclinic classes $H(p)$, which admit the dominated splitting $T_{H(p)}M=E\bigoplus_<F$ with dim$(E)=$ind$(p)$, has local product structures. 

\begin{Proposition}(\cite[Lemma 3.4]{CP})\label{TR}
 Given $\varepsilon>0$, there exists $\delta>0$ such that for any $x,y\in H(p)$ with $d(x,y)<\delta$, $\mathcal{W}^{cs}_\varepsilon(x)$ and $\mathcal{W}^{cu}_\varepsilon(y)$ are transversally intersect at a single point belonging to $H(p)$, where $\mathcal{W}^{\ast}_\varepsilon(x)\subset\mathcal{W}^{\ast}(x)$ is centered at $x$ with length $2\varepsilon$, $\ast=cs{\rm \ or \ }cu$.
\end{Proposition}

Hereafter, we assume that the homoclinic classes $H(p)$ admits the dominated splitting $T_{H(p)}M=E\bigoplus_<F$ with dim$(E)=$ind$(p)$. For this dominated splitting, $E$ is thin trapped and $f$ is uniformly $F$-expanding at the period on all periodic points homoclinically related to $p$. One  can find dense hyperbolic periodic points which has stable and unstable manifolds of uniformly size.

\begin{Lemma}\label{ST}
  Given $\delta,~\varepsilon>0$, one can find a $\delta$-dense set $\mathcal{P}\subseteq H(p)$ of hyperbolic periodic points homoclinically related to the orbit of $p$, such that for any $x\in \mathcal{P}$, $\mathcal{W}^{cs}_\varepsilon(x)$ and $\mathcal{W}^{cu}_\varepsilon(x)$ are the stable and unstable manifolds of $x$, respectively.  
\end{Lemma}

\begin{proof}
  Under our assumptions, one can get the conclusions from the Proposition \ref{USM} and Proposition \ref{SM}. 	
\end{proof}

  From the Theorem \ref{SM}, one can see that $\mathcal{P}$ is a hyperbolic set. By the Definition \ref{Def:hyperbolic} and Definition \ref{Def:expanding}, for the dominated splitting $T_{H(p)}M=E\bigoplus_<F$, there are $C>0$, $\lambda\in(0,1)$ such that for any $x\in\mathcal{P}$, one has that $$\parallel Df^n\arrowvert_{E_x}\parallel\leq C\lambda^n,~\forall~n\in\mathbb{N}^+~{\rm and}~~\prod^{\pi(x)}_{j=1}\parallel Df^{-1}\mid_{F_{f^j(x)}}\parallel\leq C\lambda^{\pi(x)}.$$  Hereafter, taking $n_0$ be the smallest positive integer such that $C\lambda^{n_0}\leq\dfrac{1}{2}$, we give the definition of $(n_0,\varepsilon)$-closed pseudoorbit for the constant $\varepsilon$. 

\begin{Definition}\label{closed pseudoorbit}
   Assume that the homoclinic class $H(p)$ admits the dominated splitting $T_{H(p)} M=E\bigoplus_< F$. Given $\lambda\in(0,1)$ and $\varepsilon>0$, a $(\lambda,\varepsilon)$-thin trapped closed pseudoorbit $\{(x_i,n_i)\}_{i=0}^{m}$ is called $(n_0,\varepsilon)$-closed pseudoorbit, if $n_i\geq n_0$, for $i=0,1,\cdots,m$.	
\end{Definition}

\begin{Theorem}\label{shadowing}
	Given $\varepsilon>0$, there is a constant $\delta_0>0$ such that for any $\delta\in(0,\delta_0)$, if a finite number of orbit segment $\{(x_i,n_i)\}_{i=0}^{m}\subset \mathcal{P}$ is a $(n_0,\delta)$-closed pseudoorbit, then the $(n_0,\delta)$-closed pseudoorbit can be $\varepsilon$-shadowed by a periodic point.
\end{Theorem}

\begin{proof}
  For the given $\varepsilon>0$, by the Proposition \ref{TR}, there exists $\delta_1$ such that for any $x,y\in H(p)$ with $d(x,y)<\delta_1$, one has that $$\mathcal{W}^{cs}_\varepsilon(x) \pitchfork\mathcal{W}^{cu}_\varepsilon(y)\neq\emptyset~~{\rm and }~~\mathcal{W}^{cu}_\varepsilon(x) \pitchfork\mathcal{W}^{cs}_\varepsilon(y)\neq\emptyset.$$ Applying the Proposition \ref{TR} again, for the constant $\alpha=\min\{\varepsilon,\delta_1\}$, one obtains a constant $\delta_2$ such that $$\mathcal{W}^{cs}_\alpha(x) \pitchfork\mathcal{W}^{cu}_\alpha(y)\neq\emptyset~{\rm and }~\mathcal{W}^{cu}_\alpha(x) \pitchfork\mathcal{W}^{cs}_\alpha(y)\neq\emptyset~~\text{for~any~$x,y\in H(p)$~with $d(x,y)<\delta_2$}.$$ Due to the Lemma \ref{ST}, for this $\delta_2$, there is a $\delta_2$-dense subset $\mathcal{P}\subseteq H(p)$ of hyperbolic periodic points homoclinically related to the orbit of $p$, such that for any $x\in \mathcal{P}$, $\mathcal{W}^{cs}_\alpha(x)$ are the stable manifolds of $x$, denote by $\mathcal{W}^s_\alpha(x)$ and $\mathcal{W}^{cu}_\alpha(x)$ are the unstable manifolds of $x$, denote by $\mathcal{W}^u_\alpha(x)$.

  Let $\delta_0=\dfrac{\delta_2-\alpha/2}{2}$, given $\delta\in(0,\delta_0)$, if a finite number of orbit segment $\{(x_i,n_i)\}_{i=0}^{m}\subset \mathcal{P}$ is a $(n_0,\delta)$-closed pseudoorbit, then we can construct a sequence of points which are the intersection points of some stable manifolds and $\mathcal{W}^{cu}$ plaques.
  
  Since $\{(x_i,n_i)\}_{i=0}^{m}\subset \mathcal{P}$ is a $(n_0,\delta)$-closed pseudoorbit, by the Proposition \ref{TR}, one has that $$z_1\in\mathcal{W}^u_\alpha(f^{n_1}x_1)\pitchfork\mathcal{W}^s_\alpha(x_2)\neq\emptyset.$$For the  hyperbolic set $\mathcal{P}$, by the Definition \ref{Def:hyperbolic} and Definition \ref{Def:expanding}, one has that $\parallel Df^n\arrowvert_{E_x}\parallel\leq C\lambda^n$, for any $n\in\mathbb{N}^+$. Therefore, one has that $$d(f^{n_2}(x_2),f^{n_2}(z_1))\leq C\lambda^{n_2} d(x_2,z_1)\leq C\lambda^{n_0} d(x_2,z_1)\leq\dfrac{d(x_2,z_1)}{2}\leq\dfrac{\alpha}{2}.$$ Consequently, one has that $d(f^{n_2}(z_1),x_3)\leq d(f^{n_2}(x_2),f^{n_2}(z_1))+d(f^{n_2}(x_2),x_3)\leq\delta_2$. By the Proposition \ref{TR}, one has that $$z_2\in\mathcal{W}^{cu}_\alpha(f^{n_2}z_1)\pitchfork\mathcal{W}^s_\alpha(x_3)\neq\emptyset.$$Similarly, $$z_i\in\mathcal{W}^{cu}_\alpha(f^{n_i}z_{i-1})\pitchfork\mathcal{W}^s_\alpha(x_{i+1})\neq\emptyset,~\text{for $i=3,4,\cdots,m-1$ } \text{and}$$
  $$z_m\in\mathcal{W}^{cu}_\alpha(f^{n_m}z_{m-1})\pitchfork\mathcal{W}^s_\alpha(x_1)\neq\emptyset.$$             

  Since the compactness of the set $H(p)$, the limits of $\lim\limits_{k\rightarrow +\infty}f^{k(\sum\limits^{m}_{i=1} n_i)}(z_m)$ exists and the limits point $z=\lim\limits_{k\rightarrow +\infty}f^{k(\sum\limits^{m}_{i=1} n_i)}(z_m)$ also belongs to the set $H(p)$. Due to $$f^{\sum\limits^{m}_{i=1} n_i}(z)=f^{\sum\limits^{m}_{i=1} n_i}(\lim\limits_{k\rightarrow +\infty}f^{k(\sum\limits^{m}_{i=1} n_i)}(z_m))=\lim\limits_{k\rightarrow +\infty}f^{(k+1)(\sum\limits^{m}_{i=1} n_i)}(z_m)=z~,$$ the point $z$ is a periodic point. From our construction of the sequence of points which are the intersection points of some stable manifolds and $\mathcal{W}^{cu}$ plaques and the properties of the $(n_0,\delta)$-closed pseudoorbit, one has that the $(n_0,\delta)$-closed pseudoorbit can be $\varepsilon$-shadowed by the periodic point $z$. 
\end{proof}

\section{Proof of the Main Theorem}   

We prove the Main Theorem by contradiction under the assumptions that
\begin{itemize}
	\item $p$ is a hyperbolic periodic point;
	
	\item $H(p)$ admits a dominated splitting $T_{H(p)}M=E\bigoplus_< F$ with dim$(E)=$ind$(p)$;
	
	\item $E$ is thin trapped and $f$ is uniformly $F$-expanding at the period on the set of periodic points homoclinically related to $p$;
	
	\item $F$ is not uniformly expanding on $H(p)$.
\end{itemize}

\paragraph{Building closed pseudoorbit.}
\begin{Lemma}\label{Ob}
   For every $r\in(0,1]$, there exists a $r$-obstraction point $b$ in $H(p)$.
\end{Lemma}
\begin{proof}
    (By contradiction.) If the lemma is wrong, then there is $r\in(0,1]$ such that there is no $r$-obstraction point in $H(p)$ by the Definition \ref{obstrction point}. Therefore, for any $x\in H(p)$, there exists $n=n(x)>0$ such that $\prod_{i=1}^n\|Df^{-1}|_{F(f^i(x))}\|<r^n$. Let $U(x)$ be the neighborhood of $x$ such that for any $y\in U(x)$, one has that $\prod_{i=1}^{n(x)}\|Df^{-1}|_{F(f^i(y))}\|<r^{n(x)}$. Since $H(p)$ is a compact set, there exist finite points $x_1,\cdots,x_m$ such that $H(p)\subset\bigcup_{i=1}^m U(x_i)$. Let 
   $$N=\max_{1\leq i\leq m}\{n(x_i)\},~C=\max_{x\in H(p)}\{\frac{\|Df^{-1}|_{F(f(x))}\|}{r},\cdots,\frac{\prod_{i=1}^{N}\|Df^{-1}|_{F(f^i(x))}\|}{r^{N}}\}.$$For any $x\in H(p)$ and $n\in\mathbb{N}^+$, by splitting every orbit segment $(x,f^nx)$ in segments of the form $(f^ix,f^{n(f^ix)}(f^ix))$, one has that $\prod_{i=1}^{n}\|Df^{-1}|_{F(f^i(x))}\|\leq Cr^n$. Hence $F$ is uniformly expanding. This contradicts our hypothesis that $F$ is not uniformly expanding on $H(p)$.
\end{proof}

  Now we construct a periodic closed pseudoorbit $P$ as follows. Hereafter, we fixed a sequence numbers $0<\lambda<r_4<r_3<r_2<r_1\leq 1$ and $\varepsilon>0$. According to the Lemma \ref{ST}, one can find a $\varepsilon/2$-dense set $\mathcal{P}\subseteq H(p)$ of hyperbolic periodic points homoclinically related to the orbit of $p$, such that every point belonging to $\mathcal{P}$ has stable and unstable manifolds of uniformly size. By the Lemma \ref{Ob}, there is a $r_1$-obstruction point $b_1\in H(p)$. Therefore, by the Lemma \ref{infty}, $HT_{1}(q_n,\mu_2)\rightarrow\infty$ as $n\rightarrow\infty$. This means that there is a sequence periodic points in $H(p)$ and the period of those periodic points tends to infinity. Consequentlty, we may assume that the period of every periodic point in the $\varepsilon/2$-dense set $\mathcal{P}\subseteq H(p)$ is large enough.  

\paragraph{Step 1.}
 By the Lemma \ref{Ob}, there is a $r_1$-obstruction point $b_1\in H(p)$. Therefore, one can find a sequence $\{q_n:q_n\thicksim p\}$ such that $\lim\limits_{n\rightarrow\infty}q_n=b_1$. For $\varepsilon/2>0,~r_2<\lambda_1<r_1\leq 1$, by the Lemma \ref{obstrction}, there exists $N_1=N_1(\varepsilon,\lambda_1)>0$ such that if $(x,f^{N_1}(x))$ is a $\lambda_1$-obstruction segment, then $d(x,\Lambda(\lambda_1))<\varepsilon/2$. Taking a $x_1\in \mathcal{P}$ with $\Gamma_2(x_1,\lambda_1)-\Gamma_1(x_1,\lambda_1)-1\geq N_1$, we get uniform $\lambda_1$-strings $(x_1,f^{HT_1(x_1,\lambda_1)}(x_1))$, $(f^{HT_1(x_1,\lambda_1)}(x_1),f^{\Gamma_1(x_1,\lambda_1)}(x_1))$ and $\lambda_1$-obstruction segment $(f^{\Gamma_1(x_1,\lambda_1)}(x_1),f^{\Gamma_2(x_1,\lambda_1)-1}(x_1))$. Then, there exists a $\lambda_1$-obstruction point $b_2\in H(p)$ such that $d(b_2,f^{\Gamma_1(x_1,\lambda_1)}(x_1))<\varepsilon/2$.

\paragraph{Step 2.} 
 For the $\lambda_1$-obstruction point $b_2\in H(p)$, we also use $\{q_n\}$ denote the sequence $\{q_n:q_n\thicksim p\}$ such that $\lim\limits_{n\rightarrow\infty}q_n=b_2$. For $\varepsilon/2>0,~r_2<\lambda_2<\lambda_1<r_1$, by the Lemma \ref{obstrction}, there exists $N_2=N_2(\lambda_2,\varepsilon)>0$ such that if $(x,f^{N_2}(x))$ is a $\lambda_2$-obstruction segment, then $d(x,\Lambda(\lambda_2))<\varepsilon/2$. By the Lemma \ref{infty}, $HT_{1}(q_n,\mu_2)\rightarrow\infty$ as $n\rightarrow\infty$. Let $B$ be the subset of $\mathcal{P}$ such that for every $q_n\in B$, one has that $$r_3^{HT_{1}(q_n,\lambda_2)-1}\cdot m\cdot m^{\Gamma_1(x_1,\lambda_1)-HT_{1}(x_1,\lambda_1)}\geq r_4^{HT_{1}(q_n,\lambda_2)+\Gamma_1(x_1,\lambda_1)-HT_{1}(x_1,\lambda_1)},$$ where $m\triangleq\inf_{x\in H(p)}\|Df^{-1}|_{F(x)}\|$. Taking $x_2\in B$ with $\Gamma_2(x_2,\mu_2)-\Gamma_1(x_2,\mu_2)-1\geq N_2$, we get uniform $\lambda_2$-strings $(x_2,f^{HT_1(x_2,\lambda_2)}(x_2))$, $(f^{HT_1(x_2,\lambda_2)}(x_2),f^{\Gamma_1(x_2,\lambda_2)}(x_2))$ and $\lambda_2$-obstruction segment $(f^{\Gamma_1(x_2,\lambda_2)}(x_2),f^{\Gamma_2(x_2,\lambda_2)-1}(x_2))$. Therefore, there exists a $\lambda_2$-obstruction point $b_3\in H(p)$ such that $d(b_3,f^{\Gamma_1(x_2,\lambda_2)}(x_2))<\varepsilon/2$. 
\paragraph{Step 3.} For $\varepsilon>0$, taking $\lambda_j$ with $r_4<r_3<r_2<\cdots<\lambda_j<\lambda_{j-1}<\cdots<\lambda_2<\lambda_1<r_1$, where $j=1,2,\cdots$, by repeating the Step 2, one can get a sequence point $\{x_n\}$ satisfying the following properties: 
\begin{itemize}
	\item $(x_j,f^{HT_1(x_j,\lambda_j)}(x_j))$, $(f^{HT_1(x_j,\lambda_j)}(x_j),f^{\Gamma_1(x_j,\lambda_j)}(x_j))$ are uniform $\lambda_j$-strings;
	
	\item $(f^{\Gamma_1(x_j,\lambda_2)}(x_j),f^{\Gamma_2(x_j,\lambda_j)-1}(x_j))$ are $\lambda_j$-obstruction segment; 
	
	\item $d(x_j,f^{\Gamma_1(x_{j-1},\lambda_{j-1})}(x_{j-1}))\leq \varepsilon$;
	
	\item $r_3^{HT_{1}(x_j,\lambda_j)-1}\cdot m\cdot m^{\Gamma_1(x_{j-1},\lambda_{j-1})-HT_{1}(x_{j-1},\lambda_{j-1})}\geq r_4^{HT_{1}(x_j,\lambda_j)+\Gamma_1(x_{j-1},\lambda_{j-1})-HT_{1}(x_{j-1},\lambda_{j-1})};$
\end{itemize}
   From the above step, we construct a pseudoorbit which is not closed. As $H(p)$ is compact set, there exist $m_0\in\mathbb{N}$ and $k>0$, such that $$d(f^{HT_1(x_{m_0},\lambda_{m_0})}(x_{m_0}),f^{HT_1(x_{m_0+k},\lambda_{m_0+k})}(x_{m_0+k}))<\varepsilon.$$ Let $K_j=HT_1(x_{m_0+j},\lambda_{m_0+j}),~j=1,2,\cdots,k$. Let $y_{m_0+j}=f^{HT_1(x_{m_0+j},\lambda_{m_0+j})}(x_{m_0+j}),j=0,\cdots,k-1$ and $L_j=\Gamma_1(x_{m_0+j},\lambda_{m_0+j})-HT_1(x_{m_0+j},\lambda_{m_0+j}), j=0,1,\cdots,k-1$. Therefore, we get the closed pseudoorbit $P$ which is the union of uniform $\lambda_1$-strings as $$(y_{m_0},f^{L_0}(y_{m_0})),(x_{m_0+1},f^{K_1}(x_{m_0+1})),(y_{m_0+1},f^{L_1}(y_{m_0+1})),(x_{m_0+2},f^{K_2}(x_{m_0+2})),$$  $$\cdots,(y_{m_0+k-1},f^{L_{k-1}}(y_{m_0+k-1})),(x_{m_0+k},f^{K_k}(x_{m_0+k})).$$ where $y_{m_0+j}=f^{K_j}(x_{m_0+j})$ and $d(f^{L_j}(y_{m_0+j}),x_{m_0+j+1})<\varepsilon,~j=0,1,\cdots,k-1$.

\paragraph{Estimation about periodic orbit.}
  From the construction of closed pseudoorbit, by the Theorem \ref{shadowing}, for $\delta>0$, there exist $\eta_0=\eta_0(\lambda,\delta)>0$, such that for any $\eta\in(0,\eta_0]$, there exists a periodic point $\delta$-shadows $(\lambda,\eta)$-thin trapped closed pseudoorbit. In fact, we also has the following properties as the Lemma \ref{uniform size} for the periodic point.   

\begin{Lemma}\label{uniform size}
   For the fixed $r_4<r_3<r_1$, there is a $\delta_0>0$ such that all those periodic points which $\delta_0$-shadows $(\lambda,\eta)$-thin trapped closed pseudoorbit have stable and unstable manifolds of uniformly size. 	
\end{Lemma}  
\begin{proof}
  For the fixed $r_4<r_3<r_1$, by the Theorem \ref{neighbor of contraction}, there is a constant $\delta_0$ such that for any $x\in B(q_n,\delta_0)$, one has that $$\prod_{i=1}^{\Gamma_1(q_n,\lambda_n)}\lVert Df^{-1}\arrowvert_{F(f^ix)}\rVert\geq r_4^{\Gamma_1(q_n,\lambda_n)}~\text{and}\prod_{i=1}^{\Gamma_1(q_n,\lambda_n)}\lVert Df^{-1}\arrowvert_{F(f^ix)}\rVert\leq r_1^{\Gamma_1(q_n,\lambda_n)}.$$ By the Theorem \ref{shadowing}, for $\delta_0>0$, there exist $\eta_0=\eta_0(\lambda,\delta_0)>0$, such that for any $\eta\in(0,\eta_0]$, there exists a periodic point $\delta_0$-shadows $(\lambda,\eta)$-thin trapped closed pseudoorbit. In the construction of closed pseudoorbit, taking $\varepsilon=\eta_0/4$, one can construct $(\lambda,\eta_0)$-thin trapped closed pseudoorbit. Therefore, there exists a periodic point $y$ $\delta_0$-shadows $(\lambda,\eta_0)$-thin trapped closed pseudoorbit. This means $y\in B(q_n,\delta_0)$ for any $q_n$ in $(\lambda,\eta_0)$-thin trapped closed pseudoorbit. Therefore,  $$\prod_{i=1}^{\pi(y)}\|Df^{-1}|_{F(f^{i}(y))}\|\geq r_4^{\pi(y)}~\text{and}~ \prod_{i=1}^{\pi(y)}\|Df^{-1}|_{F(f^{i}(y))}\|\leq r_1^{\pi(y)}.$$ Since $T_{H(p)}M=E\bigoplus_{<}F$ is a dominated splitting, by the Definition \ref{DS}, one can take an suitable norm such that $\|Df|_{E(x)}\|\cdot\|Df^{-1}|_{F(f(x))}\|\leq\lambda_0$, where $0<\lambda_0<\lambda<1$. Hence
  $$\prod_{i=0}^{\pi(y)-1}\|Df|_{E(f^i(y))}\|\leq\frac{\lambda_0^{\pi(y)}}{\prod\limits_{i=1}^{\pi(y)}\|Df^{-1}|_{F(f^i(y))}\|}\leq(\frac{\lambda_0}{\lambda})^{\pi(y)}.$$ Taking $\widetilde{\lambda}=\max\{\lambda_0/\lambda,~r_1\}$, one has that $$\prod_{i=0}^{\pi(y)-1}\|Df|_{E(f^i(y))}\|\leq \widetilde{\lambda}^{\pi(y)}~\text{and}~\prod_{i=1}^{\pi(y)}\|Df^{-1}|_{F(f^{i}(y))}\|\leq \widetilde{\lambda}^{\pi(y)}.$$
  This means that those periodic points have stable and unstable manifolds of uniformly size.  
\end{proof}
  According to the Theorem \ref{SM}, by the choice of the set $\mathcal{P}$ of hyperbolic periodic points homoclinically related to the orbit of $p$, the hyperbolic periodic points belonging to $\mathcal{P}$ have stable and unstable manifold of uniformly size. By the Lemma \ref{uniform size}, those periodic points which $\delta_0$-shadows $(\lambda,\eta)$-thin trapped closed pseudoorbit have stable and unstable manifolds of uniformly size. Therefore, there is a $\delta_1>0$ such that if periodic point $\delta_1$-shadows $(\lambda,\eta)$-thin trapped closed pseudoorbit, then periodic point is homoclinically related to the hyperbolic periodic points that given in the construction of the closed pseudoorbit. Therefore, for $\delta=\min\{\delta_0,\delta_1\}$, where $\delta_0$ is given in the Lemma \ref{uniform size}, by the Theorem \ref{shadowing}, the closed pseudoorbit $P$ can be $\delta$-shadowed by a periodic point $\widetilde{p}$. And one also has that $W^s_\varepsilon(\widetilde{p})\pitchfork W^u_\varepsilon(p)\neq\emptyset$ and $W^s_\varepsilon(p)\pitchfork W^u_\varepsilon(\widetilde{p})\neq\emptyset$. This means that $\widetilde{p}$ is homoclinically related to $p$. By the Lemma \ref{uniform size}, one has the estimation $\prod_{i=1}^{\pi(\widetilde{p})}\|Df^{-1}|_{F(f^i(\widetilde{p}))}\|\geq r_4^{\pi(\widetilde{p})}$. This contradicts that $f$ is uniformly $F$-expanding at the period on $H(p)$. Consequently, the assumption that $F$ is not uniformly expanding on $H(p)$, is invalid. Therefore, $F$ is uniformly expanding on $H(p)$. Here, we finish the proof of the Main Theorem.
\bigskip  
  
  Now, we give the proof of the Theorem \ref{wpp} under the Main Theorem.
  
\begin{proof}
  (The proof of Theorem \ref{wpp}.) In the assumption of the Theorem \ref{wpp}, for the dominated splitting $T_{H(p)}=E\oplus_{<}F$, we may assume that the splitting $F$ splits in $F=E^c\oplus E^u$. Therefore, the Main Theorem shows that $f$ is not uniformly $F$-expanding at the period on all periodic points homoclinically related to $p$. 
	 
  For every $\varepsilon>0$, taking $r<1$ such that $\log(r^{-1})<\varepsilon$. Since $f$ is not uniformly $F$-expanding at the period on all periodic points homoclinically related to $p$, there is a periodic point $q$ homoclinically related to $p$ such that $\prod^{\pi(q)}_{i=1}\lVert Df^{-1}|_{F(f^i(q))}\rVert\geq r^{\pi(q)}$. Since dim$E^c=1$, one has $$\lVert Df^{\pi(q)}|_{E^c(q)}\rVert^{-1}=\prod^{\pi(q)}_{i=1}\lVert Df^{-1}|_{E^c(f^i(q))}\rVert\geq \prod^{\pi(q)}_{i=1}\lVert Df^{-1}|_{F(f^i(q))}\rVert\geq r^{\pi(q)}.$$ Therefor,~ $\frac{1}{\pi(q)}\log(\lVert Df^{\pi(q)}|_{E^c(q)}\rVert)\leq\varepsilon$.   
\end{proof}

\paragraph{Acknowledgments.} We would like to thank Dawei Yang for his useful suggestions. We would like to thank Yuntao Zang for his useful conversations.

\vskip 5pt

\noindent Wanglou Wu

\noindent School of Mathematical Sciences

\noindent Soochow University, Suzhou, 215006, P.R. China

\noindent wuwanlou@163.com, wanlouwu1989@gmail.com

\vskip 5pt

\noindent Bo Li

\noindent School of Mathematical Sciences

\noindent Soochow University, Suzhou, 215006, P.R. China

\noindent libo15962221581@163.com


\end{document}